\newtheorem{lem}{Lemma}
\newtheorem{thm}{Theorem}
\newtheorem{defn}{Definition}
\newtheorem{cor}{Corollary}
\newtheorem{oss}{Remark}
\newenvironment{bettirev}{\color{black}}{\color{black}}
\newcommand{\bber}{\begin{bettirev}}
\newcommand{\eber}{\end{bettirev}}
\numberwithin{equation}{section}
\begin{document}
\title{On a diffuse interface model of tumor growth}

\author{
{Sergio Frigeri \thanks{{\color{black} Weierstrass Institute for Applied Analysis and Stochastics, Mohrenstr. 39, D-10117 Berlin,
Germany.
%Dipartimento di Matematica ``F. Enriques'',
%Universit\`{a} degli Studi di Milano, Milano I-20133, Italy.
E-mail: \textit{SergioPietro.Frigeri@wias-berlin.de}  The author
is supported by the FP7-IDEAS-ERC-StG
Grant \#256872 (EntroPhase)}}}
\and
{Maurizio Grasselli\thanks{Dipartimento di Matematica,
Politecnico di Milano,
Milano I-20133, Italy.
E-mail: \textit{mau\-ri\-zio.grasselli@polimi.it}}}
\and
{Elisabetta Rocca\thanks{{\color{black}
Weierstrass Institute for Applied Analysis and Stochastics, Mohrenstr. 39, D-10117 Berlin,
Germany.
%Dipartimento di Matematica ``F. Enriques'',
%Universit\`{a} degli Studi di Milano, Milano I-20133, Italy.
E-mail: \textit{Elisabetta.Rocca@wias-berlin.de} and Dipartimento di Matematica ``F. Enriques'',
Universit\`{a} degli Studi di Milano, Milano I-20133, Italy.
E-mail: \textit{elisabetta.rocca@unimi.it} The author
is supported by the FP7-IDEAS-ERC-StG
Grant \#256872 (EntroPhase)}}}}

\maketitle

%\begin{center}
%{\large \color{black}Dedicated to Prof. Alfredo Lorenzi for his friendship and his energy in proposing and solving new %mahematical problems}\
%{\large \color{black}in memoriam Alfredo Lorenzi 1944-2013}\
%\end{center}

\begin{abstract}\noindent
We consider a diffuse interface model of tumor growth proposed by A.~Hawkins-Daruud et al.
This model consists of the Cahn-Hilliard equation for the tumor cell fraction $\varphi$ nonlinearly coupled with
a reaction-diffusion equation for $\psi$, which represents the nutrient-rich extracellular water volume fraction.
The coupling is expressed through a suitable proliferation function $p(\varphi)$ multiplied by the differences
of the chemical potentials for $\varphi$ and $\psi$. The system is equipped with no-flux boundary conditions
which entails the conservation of the total mass, that is, the spatial average of $\varphi+\psi$.
Here we prove the existence of a weak solution to the associated Cauchy problem, provided that
the potential $F$ and $p$ satisfy sufficiently general conditions. Then we show that the weak solution
is unique and continuously depends on the initial data, provided that $p$ satisfies slightly stronger
growth restrictions. Also, we demonstrate the existence of a strong solution and that any weak solution
regularizes in finite time. Finally, we prove the existence of the global attractor in a phase space
characterized by an a priori bounded energy.
\\
\\
\noindent \textbf{Keywords}: diffuse interface, tumor growth, Cahn-Hilliard equations, reaction-diffusion equations, weak solutions, well-posedness, global attractors.
\\
\\
\textbf{MSC 2010}: 35D30; 35K57; 35Q92; 37L30; 92C17.
\end{abstract}

\section{Introduction}

Modeling tumor growth dynamic has recently become a major issue in applied mathematics (see, for instance, \cite{CL,Letal}, cf. also \cite{AM,OPH}).
The models can be divided into two broad categories: continuum models and discrete or cellular automata models (however, see, e.g., \cite[Chap.7]{CL} for hybrid continuum-discrete models). Concerning the former ones, the necessity of dealing with multiple interacting constituents has led to consider
diffuse-interface models based on continuum mixture theory (see, for instance, \cite{CLLW,OHP,WLFC} and references therein, cf. also \cite{CBCB,Fetal,HPZO}). Such models generally consist of Cahn-Hilliard equations with transport and reaction terms which govern various types of cell concentrations. The reaction terms depend on the nutrient concentration (e.g., oxygen) which obeys to an advection-reaction-diffusion equation coupled with the Cahn-Hilliard equations. The cell velocities satisfy a generalized Darcy's (or Brinkman's) law where, besides the pressure gradient, there is also the so-called Korteweg force due to the cell concentration.
Numerical simulations of diffuse-interface model for tumor growth have been carried out in several papers (see, for instance, \cite[Chap.8]{CL} and references therein). Nonetheless, a rigorous mathematical analysis of the resulting systems of differential equations is still in its infancy. In particular, to the best of our knowledge, the first related papers are concerned with the so-called Cahn-Hilliard-Hele-Shaw system (see \cite{LTZ}, cf. also \cite{BCG,WW,WZ}) in which the nutrient is neglected. Moreover, a very recent contribution (see \cite{CGH}) is devoted to analyzing an approximation of a model recently proposed in \cite{HZO} (see also \color{black}\cite{HKNV,WZZ}\color{black}). In this model, velocities are set to zero and the state variables are reduced to the tumor cell fraction $\varphi$ and the nutrient-rich extracellular water fraction $\psi$. The corresponding PDE system is given by
\begin{align}
&\varphi_t=\Delta\mu+p(\varphi)(\psi-\mu)\label{eq1}\\
&\mu=-\Delta\varphi+F'(\varphi)\label{eq2}\\
&\psi_t=\Delta\psi-p(\varphi)(\psi-\mu)\label{eq3}
%&\partial_n\varphi=\partial_n\mu=\partial_n\psi=0\qquad\mbox{on }\partial\Omega\label{bcs}\\
%&\varphi(0)=\varphi_0,\qquad\psi(0)=\psi_0.\label{ics}
\end{align}
in $\Omega\times(0,\infty)$, where $\Omega\subset\mathbb{R}^3$ is a bounded smooth domain.
Here $F$ is the typical double-well associated with the Ginzburg-Landau free-energy functional,
while $p$ is a proliferation function which must be nonnegative and may have, for instance, the
form $p(s)=p_0(1-s^2)\chi_{[-1,1]}(s)$ for $s\in\mathbb{R}$, $p_0>0$. Here $\chi_{[-1,1]}$ represents
the indicator function of $[-1,1]$. \color{black} However, in this paper we suppose $p$ to be, at least, Lipschitz continuous, but we allow it to satisfy a suitable growth condition (cf. \eqref{assp}).
\color{black} Also, it is worth observing that more general potentials $F$,
possibly depending on $\psi$ as well, might be taken into account since they are relevant from the modeling viewpoint
(cf.~\cite{ HZO} and references therein).
This could be the subject of a future work. \color{black}

System \eqref{eq1}--\eqref{eq3} is equipped with the no-flux boundary conditions
\begin{align}
&\partial_n\varphi=\partial_n\mu=\partial_n\psi=0\quad\mbox{on }\partial\Omega\times(0,\infty),\label{bcs}
%&\varphi(0)=\varphi_0,\qquad\psi(0)=\psi_0,\label{ics}
\end{align}
and initial conditions
\begin{align}
&\varphi(0)=\varphi_0,\qquad\psi(0)=\psi_0\quad\mbox{in }\Omega .\label{ics}
\end{align}
In \cite{CGH} the authors consider a relaxed model in which the chemical potential $\mu$ contains a viscous term $\alpha \varphi_t$, $\alpha>0$ and equation \eqref{eq1} has an additional term $\alpha\mu_t$ which requires a further initial condition.
For this model, existence and uniqueness of a variational solution is proven under very general conditions on $F$, while $p$
is supposed to be globally bounded and Lipschitz continuous. Then, imposing substantial restrictions on $F$ (e.g., polynomial
growth of order $4$), the authors prove the existence of a sequence $\{\alpha_n\}$ and a sequence of solutions which converges to a solution to problem \eqref{eq1}--\eqref{ics} as $\alpha_n$ goes to $0$. Such a solution is more regular and unique provided that $\varphi_0$ is smooth enough.

Here we want to analyze problem \eqref{eq1}--\eqref{ics} without any regularizing term. More precisely, it is not difficult to check that
system \eqref{eq1}--\eqref{eq3} with \eqref{bcs} is characterized by the total energy balance law (see \cite[(10)]{HZO})
\begin{align}
&\frac{d}{dt}\mathcal{E}(\varphi,\psi)
+\Vert\nabla\mu\Vert^2+\Vert\nabla\psi\Vert^2+\int_\Omega p(\varphi)(\mu-\psi)^2=0,
\label{enid-intro}
\end{align}
where the energy $\mathcal{E}$ is given by
\begin{align}
&\mathcal{E}(\varphi,\psi):=\frac{1}{2}\Vert\nabla\varphi\Vert^2+\frac{1}{2}\Vert\psi\Vert^2+\int_\Omega F(\varphi).\label{toten}
\end{align}
Therefore, it seems natural to find a solution assuming that the initial data have just finite energy. This is our
first result, namely, existence of a weak solution of finite energy. The assumptions on $F$ and $p$ are more general than the ones in \cite{CGH} for the case $\alpha=0$. In particular, in the present contribution $p$ can have a polynomially controlled growth. Concerning $F$, we can take any $C^2$ and $\lambda_1$-convex potential satisfying $|F'|\leq \lambda_2F+\lambda_3$ for some nonnegative constants $\lambda_1, \, \lambda_2,\,\lambda_3$. For instance, $F(s)=\exp(s)$ or $F$ with arbitrary polynomial growth. Also, with a further restriction
on the growth of $p^\prime$ and assuming $F$ to have a polynomially controlled growth,  we can establish the continuous dependence on the initial data (and so the uniqueness of weak solutions).

The proof is obtained by suitably approximating the potential $F$ with a coercive sublinear potential $F_m$ and finding an approximating solution of such a problem through a Faedo-Galerkin scheme. The crucial point then consists in obtaining appropriate a priori estimates to pass to the limit via compactness results with respect to $m$. In particular, a bootstrap argument is used in order to derive the optimal regularity estimate for $\varphi$, which is necessary in order to prove the continuous dependence estimate as well as for the analysis of the global longtime behavior. \color{black} For similar double approximation techniques the reader is referred to, e.g., \cite{EG, FGR}. \color{black}

Then we prove a regularity result which helps us to investigate the global longtime behavior of the solutions. Concerning this issue, observe that conditions \eqref{bcs} imply the conservation of the {\itshape total} mass
\begin{align}
&\int_\Omega\big(\varphi(t)+\psi(t)\big)=\int_\Omega(\varphi_0+\psi_0),\quad\forall t\geq 0.
\end{align}
However, we are not able to obtain independent global bounds for the spatial averages of $\varphi(t)$ and $\psi(t)$. On account of this fact, we can show that \eqref{eq1}--\eqref{bcs} generates a dynamical system taking as phase space a bounded set in the finite energy space with a constraint on the total mass. We can thus prove that such a system has a global attractor.

This is just a preliminary step towards the theoretical analysis of more refined models. For instance, one may include the fluid velocity either
given as a datum or satisfying a generalized Darcy's (or Brinkman's) law. Also, one should take a logarithmic potential $F$, which is physically more relevant, and nonconstant (possibly degenerate) mobility in the Cahn-Hilliard equation. On the other hand, the free energy functional may contain a nonlocal spatial interaction in place of the usual term $\vert\nabla\varphi\vert^2$ giving rise to a convolution operator acting on $\varphi$ in place of $\Delta \varphi$ in \eqref{eq2} (see, for instance, \cite{WLFC}, cf. also \cite{GL1,GL2}). These are just some examples of challenging extensions of the simplified model expressed by \eqref{eq1}--\eqref{eq3}.

\paragraph{Plan of the paper.} In Section~\ref{sec:notation} we define the notation and we recall a useful inequality.  In Section~\ref{sec:weak} we prove that Problem~\eqref{eq1}--\eqref{ics} admits a unique weak solution (which continuously depends on the data) under proper assumptions on the nonlinearities $F$ and $p$. In Section~\ref{sec:strong} we establish a regularity result for Problem \eqref{eq1}--\eqref{ics} that holds under the same condition on $p$ which ensures uniqueness. This result turns out to be crucial in order to eventually prove the existence of the global attractor.

\section{Notation and preliminaries}
\label{sec:notation}

Let $\Omega$ be a sufficiently regular, bounded domain in $\mathbb{R}^3$, let $T>0$ and set $Q=\Omega\times (0,T)$.
Then we define $H:=L^2(\Omega)$ and $V:=H^1(\Omega)$ and denote by $\Vert\cdot\Vert$, $(\cdot,\cdot)$ the norm and the scalar product in $H$,
respectively. If $X$ is a (real) Banach space, the notation $\langle\cdot,\cdot\rangle$ will be used to denote
the duality pairing between $X$ and its dual $X'$, \color{black} while $(\cdot,\cdot)_X$ will denote the scalar product in $X$\color{black}. For every $f\in V'$, $\overline{f}$ will stand for the average
of $f$ over $\Omega$, i.e., $\overline{f}:=|\Omega|^{-1}\langle f,1\rangle$. Here $|\Omega|$ is the Lebesgue measure of $\Omega$.

\color{black}
Since it is convenient to rewrite the equations \eqref{eq1} and \eqref{eq3}
as abstract equations in the framework of the Hilbert triplet $(V,H,V')$,
we introduce the Riesz isomorphism $A:V\to V'$
associated to the standard scalar product of~$V$, that is,
\begin{equation}
  \langle A u, v\rangle := (u,v)_V
  = \int_\Omega \left(\nabla u \cdot \nabla v + uv \right)
  \quad \hbox{for $u,v\in V$}.
  \label{defA}
\end{equation}
We notice that $A u=-\Delta u+u$ if $u\in D(A):=\big\{\varphi\in H^2(\Omega):\:\:\partial_{\bf n}\varphi=0\mbox{ on }\partial\Omega\big\}$
and that the restriction of $A$ to $D(A)$ is an isomorphism from $D(A)$ onto~$H$.
We also remark that
\begin{align}\nonumber
  &\langle A u , A^{-1} v^*\rangle
  =\langle v^*, u\rangle
  \quad \hbox{for every $u\in V$ and $v^*\in V'$}
  \\
\nonumber
  &\langle u^* , A^{-1} v^* \rangle
  = (u^*,v^*)_{V'}
  \quad \hbox{for every $u^*,v^*\in V'$}
\end{align}
where $(\cdot,\cdot)_{V'}$ is the dual scalar product in $V'$
associated to the standard one in~$V$,
and recall that $\langle v^*,u\rangle=\int_\Omega v^* u$ if $v^*\in H$ and we have
$$
  \frac d{dt} \|v^*\|_{V'}^2=
  = 2 \langle\partial_tv^* , A^{-1} v^* \rangle
  \quad \hbox{for every $v^*\in H^1(0,T; V')$} .
$$
\color{black}
Moreover, by a classical spectral theorem
there exist a sequence of eigenvalues $\lambda_j$ with $0<\lambda_1\leq\lambda_2\leq\cdots$ and $\lambda_j\to\infty$,
and a family of eigenfunctions $w_j\in D(A)$ such that $Aw_j=\lambda_j w_j$. The family of $w_j$
is an orthonormal basis in $H$ and it is also orthogonal in $V$ and $D(A)$.

We shall repeatedly use the following Gagliardo-Nirenberg inequality in dimension $3$
(see, e.g., \cite{BIN,Fr,Ga,Ni} for more details)

\begin{lem}
Let $1\leq p_1,p_2\leq\infty$, $0\leq r<l$ ($r,l\in\mathbb{N}$) and assume that
$$\theta:=\frac{3/m-3/p_1-r}{3/p_2-3/p_1-l}\in[r/l,1).$$
Then
\begin{align}
&
\Vert u \Vert_{W^{r,m}(\Omega)}\leq c\Vert u \Vert_{L^{p_1}(\Omega)}^{1-\theta}\Vert u\Vert_{W^{l,p_2}(\Omega)}^\theta,\quad \forall u\in W^{l,p_2}\cap L^{p_1}(\Omega).\label{GN}
\end{align}
\end{lem}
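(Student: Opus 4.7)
The plan is to prove this via the classical two-step strategy: first establish the scaling-invariant form on the whole space $\mathbb{R}^3$, then transfer to the bounded domain $\Omega$ through an extension operator. Since $\Omega$ is assumed sufficiently regular (Lipschitz suffices), Stein's extension theorem provides a linear operator $E:W^{l,p_2}(\Omega)\to W^{l,p_2}(\mathbb{R}^3)$ that is simultaneously bounded from $L^{p_1}(\Omega)$ into $L^{p_1}(\mathbb{R}^3)$. Applying the whole-space inequality to $Eu$ and restricting to $\Omega$ then yields \eqref{GN} with a constant depending only on $\Omega$, $m$, $p_1$, $p_2$, $r$, $l$.

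For the whole-space step I would use a Littlewood-Paley / Bernstein argument. Decompose $u=\sum_{j\in\mathbb{Z}}\Delta_j u$ into dyadic frequency blocks with $\Delta_j u$ spectrally supported in $|\xi|\sim 2^j$. Bernstein's inequalities give, for each $j$,
\[
\|D^r\Delta_j u\|_{L^m(\mathbb{R}^3)}\leq C\,2^{j(r+3/p_1-3/m)}\|\Delta_j u\|_{L^{p_1}(\mathbb{R}^3)}
\]
and
\[
\|D^r\Delta_j u\|_{L^m(\mathbb{R}^3)}\leq C\,2^{j(r-l+3/p_2-3/m)}\|D^l\Delta_j u\|_{L^{p_2}(\mathbb{R}^3)}.
\]
Taking the geometric mean of these two estimates with weight $\theta$ produces a bound in which the power of $2^j$ cancels precisely under the scaling relation that defines $\theta$ in the statement. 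One then sums over $j$ via the Littlewood-Paley characterization of Sobolev norms, or equivalently splits the dyadic series into low- and high-frequency regimes at a threshold $j_*$ and estimates each regime with the appropriate Bernstein bound.

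The main technical obstacle is the careful control of the $\ell^p$-summation across the dyadic blocks, since one is assembling $L^m$ norms rather than $L^2$ ones where Parseval would apply directly. The standard device is to split the series at a threshold $j_*$ chosen to optimize the trade-off between $\|u\|_{L^{p_1}}$ and $\|u\|_{W^{l,p_2}}$; the minimizing choice is precisely what produces the exponent $\theta$ on the right-hand side. The restriction $\theta<1$ is needed to exclude the critical Sobolev endpoint embedding $W^{l,p_2}\hookrightarrow W^{r,m}$, which can fail for marginal indices on bounded domains without enough boundary regularity, while $\theta\geq r/l$ guarantees that the smoothing direction in the Bernstein interpolation is consistent with the ordering $0\leq r<l$; both restrictions are forced by the scaling identity and are therefore automatic in the argument.
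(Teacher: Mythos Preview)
The paper does not prove this lemma; it is stated as a classical result with references to Besov--Il'in--Nikol'ski\u{\i}, Friedman, Gagliardo, and Nirenberg, so there is no ``paper's own proof'' to compare against.

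Your outline is a reasonable modern route, but as written it has genuine gaps that would prevent it from covering the full range of indices in the statement. First, the two Bernstein inequalities you write down both tacitly assume an embedding direction: $\|\Delta_j u\|_{L^m}\lesssim 2^{3j(1/p-1/m)}\|\Delta_j u\|_{L^p}$ holds for frequency-localized functions only when $p\le m$, so your first line needs $p_1\le m$ and your second needs $p_2\le m$. The lemma allows arbitrary $1\le p_1,p_2\le\infty$, and in applications in this very paper one often has $p_2>m$ (e.g.\ interpolating between $L^6$ and $H^3$ to land in an intermediate $L^m$). Second, reassembling $\sum_j\|D^r\Delta_j u\|_{L^m}$ into $\|D^r u\|_{L^m}$ via the Littlewood--Paley square function requires $1<m<\infty$; the endpoint $m=\infty$ (which the lemma permits when $\theta<1$) is not covered by your scheme. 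Third, the low-frequency block needs a separate argument, which you do not mention.

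None of this is irreparable---the original proofs (Nirenberg's iterated-H\"older argument on lines, or the real-interpolation route) handle all the cases uniformly---but the Littlewood--Paley sketch you give would need substantial additional case analysis to match the generality of the stated lemma. If you want to keep the dyadic approach, you should at least restrict to $1<m<\infty$, treat the two regimes $p_i\le m$ and $p_i>m$ separately (using Sobolev embedding rather than Bernstein in the latter), and handle the low-frequency part by a direct Young-type bound.
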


\section{Existence and uniqueness of weak solutions}
\label{sec:weak}

In this section we prove that Problem \eqref{eq1}--\eqref{ics} admits a weak solution, provided that
$F$ and $p$ have polynomial growth with given orders $\rho$ and $q$, respectively.
The upper bounds on $\rho$ and $q$ in Theorem \ref{existence} ensure the existence of a weak solution with optimal regularity for $\varphi$,
i.e., $\varphi\in L^2(0,T;H^3(\Omega))$. Such assumptions can be relaxed if only existence of the weak solution is required
(cf. Corollary \ref{cor}). An additional restriction on the proliferation function $p$  allows
us to prove uniqueness as well as a continuous dependence estimate on the initial data for weak solutions.
In any case, our assumptions on $F$ and $p$ are more general than those made in \cite{HZO} (cf. also \cite{CGH} when $\alpha=0$).
%such weak solution will be proven to be unique

Let us begin with the existence result, which will be proven, for the case where the growth $\rho$ of $F$ is greater
than $4$, by means of a double approximation
procedure, namely by first exploiting the Faedo-Galerkin scheme to prove existence for \color{black}$\rho\leq 4$ \color{black} and then
by approximating $F$ with a sequence of potentials having growth which is at most $4$.

 The assumptions we need for the existence are the following
%\textbf{Assumptions on double-well potential $F$ and on proliferation function $p$}
\begin{description}
{\color{black}

\item[(F)] $F\in C^2(\mathbb{R})$  can be written as
           \begin{align}
           &F(s)=F_0(s)+\lambda(s)
           \end{align}
where $F_0\in C^2(\mathbb{R})$ and $\lambda\in C^2(\mathbb{R})$ satisfies
$|\lambda''(s)|\leq\alpha$, for all $s\in\mathbb{R}$, and for some constant $\alpha\geq 0$. Moreover, we assume
           \begin{align}
           &c_1(1+|s|^{\rho-2})\leq F_0''(s)\leq c_2(1+|s|^{\rho-2}),\label{F1}\\
           & F(s)\geq c_3|s|-c_4,\label{F2}
           \end{align}
for all $s\in\mathbb{R}$, with $c_1,c_2,c_3>0$, $c_4\in\mathbb{R}$ and with $\rho\in [2,6)$.
}
\item[(P)] $p\in C^{0,1}_{loc}(\mathbb{R})$ satisfies
\begin{align}
&0\leq p(s)\leq {\color{black}c_5}(1+|s|^q),\label{assp}
\end{align}
for all $s\in\mathbb{R}$, with {\color{black}$c_5>0$} and with $q\in [1,9)$.
        %   $$p\in W^{1,\infty}(\mathbb{R}),\qquad p(s)\geq 0\quad\forall s\in\mathbb{R}.$$

\end{description}

Before stating the existence result, let us introduce the definition of weak solution to Problem
\eqref{eq1}--\eqref{ics}.
\begin{defn}\label{def}
Let $\varphi_0\in V$, $\psi_0\in H$ and $0<T<\infty$ be given.
Then, a pair $[\varphi,\psi]$ is a weak solution to \eqref{eq1}--\eqref{ics}
on $[0,T]$ if
\begin{align}
&\varphi\in L^\infty(0,T;V)\cap L^2(0,T;H^2(\Omega)),\quad\varphi_t\in L^r(0,T;D(A^{-1})),\label{basicreg1}\\
&\mu:=-\Delta\varphi+F'(\varphi)\in L^2(0,T;V),\label{basicreg2}\\
&\psi\in L^\infty(0,T;H)\cap L^2(0,T;V),\quad\psi_t\in L^r(0,T;D(A^{-1})),\label{basicreg3}
%&F(\varphi)\in L^\infty\big(0,T;L^1(\Omega)\big),\qquad\sqrt{p(\varphi)}(\mu-\psi)\in L^2(0,T;H),\label{basicreg4}
\end{align}
for some $r>1$, and the following identities are satisfied
\begin{align}
&\langle\varphi_t,\chi\rangle+(\nabla\mu,\nabla\chi)=\big(p(\varphi)(\psi-\mu),\chi\big),\label{wf1}\\
&\langle\psi_t,\xi\rangle+(\nabla\psi,\nabla\xi)=-\big(p(\varphi)(\psi-\mu),\xi\big),\label{wf2}
\end{align}
for all $\chi,\xi\in D(A)$ and for almost all $t\in(0,T)$, together with the initial conditions \eqref{ics}.

\end{defn}
\begin{oss}
{\upshape
Notice that the regularity properties of weak solution imply that
$$\varphi\in C_w([0,T];V),\quad\psi\in C_w([0,T];H).$$
Hence, the initial conditions \eqref{ics} make sense. \color{black} Moreover, we point out that the required regularity
for $\partial\Omega$ in order to prove our theorems is at least $C^{2,1}$. For instance, we need some regularity for the
eigenfunctions (see proof of Theorem \ref{existence}) as well as when we deduce that $\varphi\in L^2(0,T;H^2(\Omega))$ (cf.~\eqref{basicreg1}).  \color{black}
}
\end{oss}

\begin{thm}
\label{existence}
Assume that (F) and (P) are satisfied. Let $\varphi_0\in V$ and $\psi_0\in H$. Then, for every $T>0$, Problem
\eqref{eq1}--\eqref{ics} admits a weak solution on $[0,T]$ such that
\begin{align}
&\varphi\in L^2(0,T;H^3(\Omega)),\label{optreg}\\
%L^\infty(0,T;V)\cap ,\qquad\varphi_t\in L^r(0,T;D(A^{-1})),\label{basicreg1}\\
%&\mu\in L^2(0,T;V),\label{basicreg2}\\
%&\psi\in L^\infty(0,T;H)\cap L^2(0,T;V),\qquad\psi_t\in L^r(0,T;D(A^{-1})),\label{basicreg3}\\
&F(\varphi)\in L^\infty\big(0,T;L^1(\Omega)\big),\quad\sqrt{p(\varphi)}(\mu-\psi)\in L^2(0,T;H),\label{basicreg4}
\end{align}
which satisfies the following energy inequality
\begin{align}
&\mathcal{E}(\varphi,\psi)
%\frac{1}{2}\Vert\nabla\varphi\Vert^2+\frac{1}{2}\Vert\psi\Vert^2+\int_\Omega F(\varphi)
+\int_0^t\big(\Vert\nabla\mu\Vert^2+\Vert\nabla\psi\Vert^2\big)d\tau+\int_0^t\int_\Omega p(\varphi)(\mu-\psi)^2
\leq
%\frac{1}{2}\Vert\nabla\varphi_0\Vert^2+\frac{1}{2}\Vert\psi_0\Vert^2+\int_\Omega F(\varphi_0)
\mathcal{E}(\varphi_0,\psi_0),\qquad\forall t>0,
\label{enineq}
\end{align}
where $\mathcal{E}$ is given by \eqref{toten}. Furthermore, if $q\leq 4$, then we have
\begin{align}
&\varphi_t,\psi_t\in L^2(0,T;V'),\label{basicreg5}
\end{align}
and \eqref{enineq} holds with the {\color{black}equality} sign. Moreover, in this case the weak formulation \eqref{wf1}, \eqref{wf2} is satisfied also for all $\chi,\xi\in V$.
% the following energy identity holds
%\begin{align}
%&\frac{d}{dt}\Big(\frac{1}{2}\Vert\nabla\varphi\Vert^2+\frac{1}{2}\Vert\psi\Vert^2+\int_\Omega F(\varphi)\Big)
%+\Vert\nabla\mu\Vert^2+\Vert\nabla\psi\Vert^2+\int_\Omega p(\varphi)(\mu-\psi)^2=0.
%\label{enid}
%\end{align}

\end{thm}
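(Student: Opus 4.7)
The plan is to construct the weak solution via a double approximation. First I approximate $F$ by a sequence of potentials $F_m\in C^2(\mathbb{R})$ whose second derivative is globally bounded (extend $F_0$ quadratically outside $[-m,m]$ while preserving $\lambda_1$-convexity and \eqref{F2}), so that $F_m\to F$ with its derivatives locally uniformly. For each fixed $m$ I build a solution of the approximate problem via a Faedo-Galerkin scheme in the eigenfunction basis $\{w_j\}$ of $A$, and finally I pass to the limit $m\to\infty$. The energy identity \eqref{enid-intro} is the backbone of every bound, and the sharp threshold $\rho<6$ enters through the bootstrap that upgrades $\varphi\in L^2(H^2)$ to $\varphi\in L^2(H^3)$.

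At the Galerkin level, let $V_n=\mathrm{span}\{w_1,\ldots,w_n\}$ and $P_n$ the $L^2$-projection onto $V_n$, and search for $\varphi_n,\psi_n\in V_n$ satisfying the projected system with $\mu_n:=-\Delta\varphi_n+P_nF_m'(\varphi_n)$. Since $F_m'$ has linear growth, the resulting ODE system is locally Lipschitz and yields a local solution. Testing the $\varphi_n$-equation against $\mu_n$ and the $\psi_n$-equation against $\psi_n$ and summing produces the Galerkin analog of \eqref{enid-intro} (with $F$ replaced by $F_m$), giving global existence and uniform-in-$n$ bounds for $\varphi_n$ in $L^\infty(V)$, $\psi_n$ in $L^\infty(H)\cap L^2(V)$, and $\nabla\mu_n,\nabla\psi_n,\sqrt{p(\varphi_n)}(\mu_n-\psi_n)$ in $L^2(H)$. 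The spatial mean of $\mu_n$ is controlled via the linear growth of $F_m'$, so $\mu_n\in L^2(V)$, and from the definition of $\mu_n$ plus elliptic regularity, $\varphi_n\in L^2(H^2)$. Estimating $\varphi_{n,t},\psi_{n,t}$ in $L^r(0,T;V')$ and applying Aubin-Lions yields strong convergence to a solution $(\varphi^m,\psi^m)$ of the $F_m$-problem.

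To pass $m\to\infty$, uniform-in-$m$ bounds follow from the $F_m$-energy identity and the fact that $\mathcal{E}_m(\varphi_0,\psi_0)\to\mathcal{E}(\varphi_0,\psi_0)<\infty$ (using $\varphi_0\in V\hookrightarrow L^6$ and $\rho<6$); combined with \eqref{F2} this gives $\varphi^m\in L^\infty(V)$. To bound $\overline{\mu^m}$ I integrate $\mu^m=-\Delta\varphi^m+F_m'(\varphi^m)$ over $\Omega$ and control $\int F_m'(\varphi^m)$ via $|F'(s)|\leq C(1+|s|^{\rho-1})$ and $\varphi^m\in L^\infty(L^6)$, using $\rho-1<5$, which gives $\mu^m\in L^2(V)$ uniformly. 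Multiplying the identity for $\mu^m$ by $F_m'(\varphi^m)$ and invoking $\lambda_1$-convexity produces the uniform bound $F_m'(\varphi^m)\in L^2(H)$, whence $\varphi^m\in L^2(H^2)$ by elliptic regularity. The crucial bootstrap toward \eqref{optreg} now writes $\nabla\Delta\varphi^m=F_m''(\varphi^m)\nabla\varphi^m-\nabla\mu^m$ and reduces $\varphi^m\in L^2(H^3)$ to the estimate $F_m''(\varphi^m)\nabla\varphi^m\in L^2(H)$; combining $|F_m''|\leq C(1+|\varphi^m|^{\rho-2})$ with the Gagliardo-Nirenberg inequality \eqref{GN} applied between $\varphi^m\in L^\infty(V)$ and $\varphi^m\in L^2(H^2)\hookrightarrow L^2(L^\infty)$ closes the Hölder chain precisely under $\rho<6$. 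Analogous space-time interpolation applied to $p(\varphi^m)(\mu^m-\psi^m)$, together with \eqref{assp} and $q<9$, bounds $\varphi_t^m,\psi_t^m$ in $L^r(V')$ for some $r>1$.

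With these bounds, Aubin-Lions extracts a subsequence converging strongly in $C([0,T];H)\cap L^2(0,T;V)$ and a.e. in $Q$, which identifies the nonlinear limits $F'(\varphi^m)\to F'(\varphi)$ and $p(\varphi^m)(\mu^m-\psi^m)\to p(\varphi)(\mu-\psi)$ and produces a weak solution satisfying \eqref{basicreg1}--\eqref{basicreg3}, \eqref{optreg}, and \eqref{basicreg4}. The energy inequality \eqref{enineq} follows by weak/weak-$*$ lower semicontinuity of each non-negative term in the $F_m$-level identity, together with the convergence of the initial energies. In the case $q\leq 4$, Hölder in space with $\varphi\in L^\infty(L^6)$ and $\mu-\psi\in L^2(L^6)$ gives $p(\varphi)(\mu-\psi)\in L^2(L^{6/(q+1)})$, and since $6/(q+1)\geq 6/5$ and $L^{6/5}\hookrightarrow V'$ in $\mathbb{R}^3$, we obtain $\varphi_t,\psi_t\in L^2(V')$; then $\mu\in L^2(V)$ and $\psi\in L^2(V)$ are admissible test functions in \eqref{wf1}--\eqref{wf2}, and testing reproduces the energy balance with equality. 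The main technical obstacle is the bootstrap step for \eqref{optreg}: closing the interpolation estimate on $F''(\varphi)\nabla\varphi$ in $L^2(H)$ under the sharp growth $\rho<6$ is what fixes the admissible exponents and unlocks the subsequent continuous dependence and attractor results.
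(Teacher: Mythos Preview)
Your overall architecture---double approximation (regularised potential $F_m$ plus Galerkin), energy identity for uniform bounds, compactness, lower semicontinuity for the energy inequality, and the $q\le 4$ upgrade via $L^{6/5}\hookrightarrow V'$---matches the paper. However, the decisive technical step is not correct as you state it.

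\medskip
\textbf{The one-step bootstrap to $L^2(0,T;H^3)$ fails for $4<\rho<6$.} You claim that, once $\varphi^m\in L^\infty(0,T;V)\cap L^2(0,T;H^2)$, the estimate $F_m''(\varphi^m)\nabla\varphi^m\in L^2(Q)$ ``closes precisely under $\rho<6$''. It does not. With only $L^\infty(V)\cap L^2(H^2)$ available, the optimal space--time interpolations are $\varphi^m\in L^{4\eta/(\eta-6)}(0,T;L^\eta)$ for $6\le\eta\le\infty$ and $\nabla\varphi^m\in L^{4d/(3(d-2))}(0,T;L^d)$ for $2\le d\le 6$. Writing the H\"older condition $(\rho-2)/b+1/d=1/2$ in space and the corresponding condition in time, the two constraints combine to force exactly $\rho=4$; for $\rho>4$ no admissible choice of exponents exists. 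Equivalently, $\varphi^m\in L^{10}(Q)$ and $\nabla\varphi^m\in L^{10/3}(Q)$ give $|\varphi^m|^{\rho-2}\nabla\varphi^m\in L^2(Q)$ only when $(\rho-2)/10+3/10\le 1/2$, i.e.\ $\rho\le 4$. So your argument actually reproduces the paper's Step~I (the case $\rho\le 4$) and stops there.

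\medskip
\textbf{What is missing.} For $4<\rho<6$ the paper runs an \emph{iterative} bootstrap. Starting from $\varphi_m\in L^\infty(V)\cap L^2(H^2)$ one first improves the \emph{integrability} of $\Delta\varphi_m$: since $F_m'(\varphi_m)\in L^2(0,T;L^{6/(\rho-3)})$, elliptic regularity gives $\varphi_m\in L^2(0,T;W^{2,6/(\rho-3)})$. This yields $\varphi_m\in L^{2(11-\rho)}(Q)$ and $\nabla\varphi_m\in L^{2(11-\rho)/3}(Q)$, hence $\nabla F_m'(\varphi_m)\in L^{s_0}(Q)$ with $s_0=2(11-\rho)/(\rho+1)$. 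If $s_0\ge 2$ (i.e.\ $\rho\le 5$) one is done; otherwise one feeds $\varphi_m\in L^{s_0}(0,T;W^{3,s_0})$ back into the estimate and iterates, producing $s_k=\min\{2,(7/(\rho+1))^k s_0\}$. Since $\rho<6$ implies $7/(\rho+1)>1$, the iteration reaches $s_k=2$ in finitely many steps. This finite bootstrap is the mechanism through which the threshold $\rho<6$ actually enters; it is absent from your sketch.

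\medskip
A secondary point: for general $q\in[1,9)$ the time derivatives are only controlled in $L^r(0,T;D(A^{-1}))$, not $L^r(0,T;V')$, because the duality estimate on $p(\varphi)(\psi-\mu)$ requires the test function in $L^\infty(\Omega)$ (hence in $D(A)$) when $q$ is large. Your $V'$ bound is only available once $q\le 4$ (or, with the $H^3$ regularity, for $q<6$).
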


{\color{black}
\begin{oss}
{\upshape
The bound $\rho< 6$ is required only to gain the optimal regularity
$\varphi\in L^2(0,T;H^3(\Omega))$. Actually, we should only require $\rho\leq 6$. However, due to technical reasons, we are not able to perform our bootstrap technique in the case $\rho=6$ (cf. Step II in the proof of Theorem~\ref{existence}).
Nevertheless, the existence of a weak solution (without this optimal regularity) can be proven under more general assumptions on $F$
(together with a slight restriction on $q$). In particular, for $F$ with polynomial growth of arbitrary order (see Corollary \ref{cor}).

}
\end{oss}
}

The following lemma will turn to be useful in the proof
of Theorem \ref{existence} (cf. Step II). Indeed, it allows to suitably approximate a regular
potential having general $\rho-$growth (in particular in case $\rho>4$) and satisfying conditions
\eqref{F1}, \eqref{F2} with a sequence of regular potentials
having {\color{black}quadratic} growth.

\color{black}

{\color{black}
\begin{lem}
\label{Fapproxim}

Assume that $F$ satisfies (F) with $\rho>2$.
%\eqref{F1} and \eqref{F2} for some given $\rho\geq 2$
%{\color{black} and $c_3>0$}.
 Then, there exists a sequence of $F_m\in C^2(\mathbb{R})$ satisfying
$$|F_m(s)|\leq \alpha_m(1+s^2),\qquad\forall s\in \mathbb{R},$$
for some constant $\alpha_m\geq 0$,
such that $F_m(s)\to F(s)$ pointwise for all $s\in\mathbb{R}$ as $m\to+\infty$
and fulfilling, for every $m\in\mathbb{N}$,  the bounds
\begin{align}
&|F_m(s)|\leq k_0|F(s)|,\quad |F_m'(s)|\leq k_1|F'(s)|,\quad |F_m''(s)|\leq k_2 |F''(s)|,\quad\forall s\in\mathbb{R},
\label{Fm}
\end{align}
and the equi-coercivity conditions
\begin{align}
&F_m(s)\geq k_3 s^2-k_4,\qquad F_m''(s)\geq -k_5\quad\forall s\in\mathbb{R},\label{equicoer}
\end{align}
where $k_i$, $i=0,\cdots, 5$ are some positive constants depending on $F$ and $\rho$ only.
\end{lem}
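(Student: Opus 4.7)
The natural approach is to truncate the higher-order growth of $F$ outside $[-m,m]$ by replacing it with its second-order Taylor polynomial at $\pm m$. Concretely, I would set
\begin{align*}
F_m(s) := \begin{cases} F(-m) + F'(-m)(s+m) + \tfrac{1}{2}F''(-m)(s+m)^2, & s < -m,\\ F(s), & -m \leq s \leq m,\\ F(m) + F'(m)(s-m) + \tfrac{1}{2}F''(m)(s-m)^2, & s > m.\end{cases}
\end{align*}
Matching values and the first two derivatives at $\pm m$ immediately gives $F_m \in C^2(\mathbb{R})$, and by construction $F_m$ is a quadratic polynomial outside $[-m,m]$, so $|F_m(s)| \leq \alpha_m(1+s^2)$ for some $\alpha_m$ depending on $F(\pm m),F'(\pm m),F''(\pm m)$. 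Pointwise convergence $F_m(s) \to F(s)$ is automatic since $F_m(s) = F(s)$ as soon as $m > |s|$.

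For the bounds \eqref{Fm}, on $[-m,m]$ everything is trivial because $F_m \equiv F$ there. For $|s|>m$ I would use the two-sided polynomial behaviour guaranteed by (F): the upper bounds $|F^{(k)}(s)| \leq C(1+|s|^{\rho-k})$ for $k=0,1,2$ hold globally, while the matching lower bounds $|F^{(k)}(s)| \geq c|s|^{\rho-k}$ hold for $|s|$ past some threshold $s_*$ depending only on $F$ (obtained by integrating $F_0'' \geq c_1|s|^{\rho-2}$ and using that $\lambda$ is subleading thanks to $|\lambda''|\leq\alpha$). The key scaling observation is that each term $m^{\rho-k}|s-m|^k$ appearing in the Taylor extension is dominated by $|s|^\rho$ as soon as $m \leq |s|$, since $m^{\rho-k}\leq |s|^{\rho-k}$ and $|s-m|^k \leq |s|^k$. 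This yields $|F_m(s)| \leq C|s|^\rho \leq k_0|F(s)|$, and analogously for $F_m',F_m''$. The finitely many small $m$ with $m < s_*$ or for which $F$ has a zero in $(m,\infty)$ are handled by re-indexing (or by setting $F_m := F_{m_0}$ for $m < m_0$), which does not affect pointwise convergence.

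For the equi-coercivity \eqref{equicoer}, the lower bound on $F_m''$ is straightforward: $F_m'' = F_0'' + \lambda'' \geq c_1 - \alpha \geq -\alpha$ on $[-m,m]$, and outside $F_m''$ equals the constant $F''(\pm m)$, which satisfies the same bound, so $k_5 := \alpha$ works. For $F_m(s) \geq k_3 s^2 - k_4$, I would first observe that $F_0'' \geq c_1|s|^{\rho-2}$ with $\rho>2$, after two integrations, forces $F_0(s) \geq c|s|^\rho - C$, so that $F = F_0 + \lambda$ itself has a global quadratic lower bound of the required form with constants depending only on $F$ and $\rho$. On $[-m,m]$ this gives the estimate immediately. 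On $|s|>m$, substituting $t = s - m$ one writes
\begin{align*}
F_m(s) - k_3 s^2 + k_4 = \bigl[F(m) - k_3 m^2 + k_4\bigr] + \bigl[F'(m) - 2k_3 m\bigr] t + \Bigl[\tfrac{1}{2}F''(m) - k_3\Bigr] t^2,
\end{align*}
and for $m$ large each bracket is nonnegative because $F'(m) \sim m^{\rho-1}$ and $F''(m) \sim m^{\rho-2}$ grow strictly faster than $m$ and $1$, respectively, while the first bracket is controlled by the quadratic lower bound on $F$.

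The main obstacle will be keeping the constants $k_0,\ldots,k_5$ independent of $m$. This rests entirely on the strict inequality $\rho > 2$: it is precisely what allows the Taylor contributions of total degree at most two to be absorbed uniformly into quantities growing like $|s|^\rho$, and it is also what produces a uniform quadratic lower bound for $F$. The remaining verifications reduce to elementary asymptotic comparisons, with the only technical annoyance being the finitely many small values of $m$ that require a trivial re-indexing of the sequence.
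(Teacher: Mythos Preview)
Your construction is correct and genuinely different from the paper's. The paper does \emph{not} truncate by Taylor expansion; instead it regularizes the monotone part $F_0'$ via its Yosida approximation, setting $H_{0m}=m(I-J_m)$ with $J_m=(I+\tfrac1m F_0')^{-1}$, then $F_{0m}(s)=\int_0^s H_{0m}$ and $F_m=F_{0m}+\lambda$. In that framework the bounds $|H_{0m}|\le|F_0'|$, $|H_{0m}(s)|\le m|s|$ and the explicit formula $F_{0m}''(s)=F_0''(J_m(s))/\bigl(1+\tfrac1m F_0''(J_m(s))\bigr)$ fall out of standard maximal monotone theory, so $\eqref{Fm}_{2,3}$ and the equi-coercivity of $F_m''$ follow with almost no computation; the quadratic lower bound $\eqref{equicoer}_1$ is obtained by a short direct calculation using $F_{0m}(s)=\tfrac{1}{2m}H_{0m}^2(s)+F_0(J_m(s))$. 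Your approach trades this abstract machinery for elementary asymptotics: because $F_m\equiv F$ on $[-m,m]$, all bounds are trivial there, while for $|s|>m$ everything reduces to comparing Taylor contributions $m^{\rho-k}|s-m|^k$ against $|s|^\rho$, which works uniformly precisely because $\rho>2$. Both proofs ultimately need to discard (or re-index) finitely many small $m$; the paper does so explicitly when proving $\eqref{equicoer}_1$, valid only for $m\ge 16(1+\alpha)$. Your route is more self-contained and arguably more transparent, while the paper's route is cleaner once one is willing to invoke Yosida regularization, and has the advantage that $F_{0m}''\ge c_1/(1+c_1)$ holds for \emph{every} $m$ without any threshold.
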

\begin{proof}
Without loss of generality, we set
$F_0(0)=F_0'(0)=0$ (this condition can always be assumed
by redefining the function $\lambda$).
Set $H_0=F_0'$ and let $H_{0m}$ be the Yosida regularization of $H_0$ defined by (cf., e.g., \cite[p.~28]{brezis})
\begin{align}
&H_{0m}(s)=m\big(s-J_m(s)\big),\qquad J_m(s)=\Big(I+\frac{1}{m}H_0\Big)^{-1}(s),\qquad\forall s\in\mathbb{R}.\nonumber
\end{align}
%\end{align}
%where the resolvent $J_m$ of $H_0$ is defined by
%\begin{align}
%&
%\end{align}
Introduce now
\begin{align}
&F_{0m}(s)=\int_0^s H_{0m}(\sigma)d\sigma,\nonumber
\end{align}
for all $s\in\mathbb{R}$, and set
$$F_m(s)=F_{0m}(s)+\lambda(s).$$
Let us check that the sequence of $F_m$ satisfies all the stated conditions.
We shall use standard results from the theory of maximal monotone operators (applied to the single-valued
monotone function
$H_0$ defined on the whole of $\mathbb{R}$).

First, notice that $H_{0m}$ is Lipschitz continuous with Lipschitz constant equal to $m$,
and then $|H_{0m}(s)|\leq m|s|$, for all $s\in\mathbb{R}$, where we have
used the fact that $H_{0m}(0)=J_m(0)=0$, since $F_0'(0)=0$. Therefore,
$$|F_{0m}(s)|\leq \frac{1}{2}m s^2,\qquad\forall s\in\mathbb{R},$$
which implies that $F_m$ has at most quadratic growth for each $m$.

Moreover, we have $|H_{0m}(s)|\leq |H_0(s)|=|F_0'(s)|$ and also
$H_{0m}(s)\to H_0(s)=F_0'(s)$, for all $s\in\mathbb{R}$ as $m\to+\infty$.
Therefore, by the Lebesgue theorem we deduce
\begin{align}
&F_{0m}(s)\to\int_0^s F_0'(\sigma)d\sigma=F_0(s),\quad \hbox{as }m\to+\infty\nonumber
\end{align}
and this implies that $F_m(s)\to F(s)$ for all $s\in\mathbb{R}$ as $m\to+\infty$.

Next, the bound $\eqref{Fm}_1$ is immediate, since, for all $s\in\mathbb{R}$ we have
\begin{align*}
&|F_{0m}(s)|\leq\Big|\int_0^s |H_{0m}(\sigma)|d\sigma\Big|
\leq\Big|\int_0^s |F_0'(\sigma)|d\sigma\Big|=F_0(s),\\
&|F_{0m}'(s)|=|H_{0m}(s)|\leq |H_0(s)|=|F_0'(s)|.
\end{align*}
Also, we can take the growth condition \eqref{F1} into account. As far as $\eqref{Fm}_3$ is
concerned, notice first that we have $F_{0m}''(s)=H_{0m}'(s)=m(1-J_m'(s))$.
Moreover, from
\begin{align}
&r+\frac{1}{m} H_0(r)=s\Longleftrightarrow r=J_m(s)\label{resolvent}
\end{align}
we deduce
$$J_m'(s)=\frac{1}{1+\frac{1}{m}F_0''\big(J_m(s)\big)}.$$
Hence we have
\begin{align}
&F_{0m}''(s)=\frac{F_0''\big(J_m(s)\big)}{1+\frac{1}{m}F_0''\big(J_m(s)\big)}
\leq F_0''\big(J_m(s)\big)
\leq c_2\big(1+|J_m(s)|^{\rho-2}\big)\nonumber\\
&\leq c_2\big(1+|s|^{\rho-2}\big)
\leq\frac{c_2}{c_1} F_0''(s),\qquad\forall s\in\mathbb{R}.\nonumber
\end{align}
Bound $\eqref{Fm}_3$ then follows from this last estimate and \eqref{F1}.

Furthermore, we have
\begin{align}
&F_{0m}''(s)\geq\frac{F_0''\big(J_m(s)\big)}{1+F_0''\big(J_m(s)\big)}\geq\frac{c_1}{1+c_1},\qquad\forall s\in\mathbb{R},\quad\forall m,
\nonumber
\end{align}
and this, together with the assumption on $\lambda$, yields $\eqref{equicoer}_2$.
Let us finally check that also $\eqref{equicoer}_1$ holds.
%Indeed, we first observe that from
%\eqref{F1} it follows
%\begin{align}
%&\hat{c}_1\big(|s|+|s|^{\rho-1}\big)\leq F_0'(s)\mbox{sgn}(s)\leq\hat{c}_2\big(|s|+|s|^{\rho-1}\big),\qquad\forall s\in\mathbb{R}.
%\end{align}
%On the other side recall that it holds $H_{0m}(s)=H_0\big(J_m(s)\big)$.
%Moreover, assuming first that $s\geq 0$, from \eqref{resolvent} we have
%\begin{align}
%&s\leq r+H_0(r)\leq \hat{c}_3(1+r^{\rho-1}),
%\end{align}
%whence
%\begin{align}
%&J_m(s)=r\geq \hat{c}_4 s^{1/(\rho-1)}-\hat{c}_5,\qquad\forall s\geq 0,
%\end{align}
%for some $\hat{c}_4>0$ and $\hat{c}_5\geq 0$.
%Therefore, we obtain
%\begin{align}
%&F_{0m}(s)\geq \hat{c}_1\int_0^s\big(J_m(\sigma)+J_m^{\rho-1}(\sigma)\big)d\sigma\geq \hat{c}_6 s-\hat{c}_7,\qquad\forall s\geq 0,
%\end{align}
%for some $\hat{c}_6>0$ and $\hat{c}_7\geq 0$.
To this purpose we first recall the following property:
%of the primitive of Yosida regularization of derivative of convex functions:
let $\beta>0$ and $\gamma\in\mathbb{R}$ be two constants such that
$$F_0(s)\geq\beta s^2-\gamma,\qquad\forall s\in\mathbb{R}.$$
Then, we have
$$F_{0m}(s)\geq\frac{\beta}{2}s^2-\gamma,\qquad\forall s\in\mathbb{R},$$
and for all $m\geq m_0(\beta)$. We report the proof for the reader's convenience.
Indeed, observe that
\begin{align}
&F_{0m}(s)=\frac{1}{2m}H_{0m}^2(s)+F_0(J_m(s))\geq\frac{1}{2m}H_{0m}^2(s)+\beta J_m^2(s)-\gamma\nonumber\\
&=\frac{1}{2m}H_{0m}^2(s)+\beta\Big(s-\frac{1}{m} H_{0m}(s)\Big)^2-\gamma
\geq\frac{1}{4m}H_{0m}^2(s)+\beta\Big(1-\frac{4\beta}{m}\Big) s^2-\gamma\nonumber\\
&\geq\frac{\beta}{2} s^2-\gamma,\nonumber
\end{align}
provided we choose $m\geq m_0(\beta):=8\beta$.

Using now \eqref{F1} and the fact that $\rho>2$, we can write
\begin{align}
&F_0(s)\geq\hat{c}_1(|s|^\rho+s^2)\geq \frac{\hat{c}_1}{\delta}s^2-C_\delta,\nonumber
\end{align}
where $\delta>0$ will be fixed later. By employing the property recalled above and the fact that
we have $\lambda(s)\geq -\alpha s^2-\widetilde{\alpha}$, we
deduce
\begin{align}
&F_m(s)=F_{0m}(s)+\lambda(s)\geq \Big(\frac{\hat{c}_1}{2\delta}-\alpha\Big) s^2-C_\delta,\nonumber
\end{align}
which holds for all $s\in\mathbb{R}$ and for all $m\geq 8\hat{c}_1/\delta$.
Let us choose, e.g., $\delta=\hat{c}_1/2(1+\alpha)$. Therefore we have
\begin{align}
&F_m(s)\geq s^2-C,\qquad\forall s\in\mathbb{R},\qquad \forall m\geq m_0,\nonumber
\end{align}
where $m_0=16(1+\alpha)$. Hence, $\eqref{equicoer}_1$ is proven and the proof of the lemma
is complete.\end{proof}
}

\color{black}
\smallskip

\noindent
{\it Proof of Theorem~\ref{existence}.}
{\itshape Step I (case {\color{black}$\rho\leq4$}).}

%We assume to work in a suitable approximation scheme which allows to write, for the solutions
%$\varphi_n$, $\psi_n$, $\mu_n$
%of a suitable approximate problem, all the estimates that will be deduced below (we shall drop
%the index $n$ in the notation of the approximating solutions).
%Let us first assume that $p=4$,
Let us first prove the existence of a weak solution
with optimal regularity \eqref{optreg} under the assumption that $F$ has growth $4$ at most.
We shall use a Faedo-Galerkin approximation method. Let us then take the family $\{w_j\}_{j\geq 1}$
of the eigenfunctions of $A$
%with homogeneous Neumann boundary condition
as a Galerkin basis in $V$,
and let $P_n$ be the orthogonal projectors in $H$ onto the $n$-dimensional subspace $\mathcal{W}_n:=\langle w_1,\cdots w_n\rangle$
spanned by the first $n$ eigenfunctions. For $n\in\mathbb{N}$ fixed, we look for three functions of the form
\begin{align*}
\varphi_n(t):=\sum_{k=1}^n a^n_k(t)w_k,\quad\psi_n(t):=\sum_{k=1}^n b^n_k(t)w_k,\quad\mu_n(t):=\sum_{k=1}^n c^n_k(t)w_k
\end{align*}
that solve the following approximating problem
\begin{align}
&(\varphi_n',w_j)+(\nabla\mu_n,\nabla w_j)=\big(p(\varphi_n)(\psi_n-\mu_n),w_j\big),\label{pbapp1}\\
&(\mu_n,w_j)=(\nabla\varphi_n,\nabla w_j)+\big(F'(\varphi_n),w_j\big),\label{pbapp2}\\
&(\psi_n',w_j)+(\nabla\psi_n,\nabla w_j)=-\big(p(\varphi_n)(\psi_n-\mu_n),w_j\big),\label{pbapp3}\\
&\varphi_n(0)=\varphi_{0n},\quad\psi_n(0)=\psi_{0n},\label{pbapp4}
\end{align}
for $j=1,\cdots,n$, where $\varphi_{0n}:=P_n\varphi_0$ and $\psi_{0n}:=P_n\psi_0$
(prime denote the derivative with respect to time).

%Test \eqref{eq1} by $\mu$ in $H$, \eqref{eq3} by $\psi$ in $H$ and sum the resulting identities. Using \eqref{eq2} and the boundary
%conditions \eqref{bcs} we obtain the energy identity \eqref{enid}
%(which holds rigorously for the approximating solutions).

It is easy to see that solving the approximate problem
\eqref{pbapp1}--\eqref{pbapp4} is equivalent to solving a Cauchy problem for a system
of $2n$ ordinary differential equations in the $2n$ unknowns $a^n_j$, $b^n_j$.
Since $F'\in C^1$ and $p\in C^{0,1}_{loc}$, the Cauchy-Lipschitz theorem
ensures that there exists $T^\ast_n\in(0,\infty]$ such that this system has a unique maximal solution ${\bf a}^n:=(a^n_1,\cdots,a^n_n)$,
${\bf b}^n:=(b^n_1,\cdots,b^n_n)$ on $[0,T^\ast_n)$ with ${\bf a}^n,{\bf b}^n\in C^1([0,T^\ast_n);\mathbb{R}^n)$.
Hence, the approximate problem \eqref{pbapp1}--\eqref{pbapp4} admits a unique solution
$\varphi_n,\psi_n,\mu_n\in C^1([0,T^\ast_n);\mathcal{W}_n)$.

We now deduce the basic estimates on the sequence of approximating solutions. In particular, these estimates will
guarantee that $T^\ast_n=\infty$ for every $n\in\mathbb{N}$.

Multiply then \eqref{pbapp1} by $c^n_j$, \eqref{pbapp2} by $a^n_j{'}$, \eqref{pbapp3} by $b^n_j$
and sum the resulting identities over $j=1,\cdots,n$. We get the following energy identity
satisfied by the solution of the approximate problem
\begin{align}
&\frac{d}{dt}\Big(\frac{1}{2}\Vert\nabla\varphi_n\Vert^2+\frac{1}{2}\Vert\psi_n\Vert^2+\int_\Omega F(\varphi_n)\Big)
+\Vert\nabla\mu_n\Vert^2+\Vert\nabla\psi_n\Vert^2+\int_\Omega p(\varphi_n)(\mu_n-\psi_n)^2=0.
\label{enidapp}
\end{align}
%\begin{align}
%&\frac{d}{dt}\Big(\frac{1}{2}\Vert\nabla\varphi\Vert^2+\frac{1}{2}\Vert\psi\Vert^2+\int_\Omega F(\varphi)\Big)
%+\Vert\nabla\mu\Vert^2+\Vert\nabla\psi\Vert^2+\int_\Omega p(\varphi)(\mu-\psi)^2=0.
%\label{enid}
%\end{align}
By integrating \eqref{enidapp} in time between $0$ and $t$, using (F), (P) and the assumptions on the initial data
we immediately deduce the following estimates
\begin{align}
&\Vert\varphi_n\Vert_{L^\infty(0,T;V)}\leq C, \quad \Vert\psi_n\Vert_{L^\infty(0,T;H)\cap L^2(0,T;V)}\leq C,
\label{basicest1}\\
&\Vert\nabla\mu_n\Vert_{L^2(0,T;H)}\leq C,\quad \Vert\sqrt{p(\varphi_n)}(\psi_n-\mu_n)\Vert_{L^2(0,T;H)}\leq C,
\label{basicest2}\\
&\Vert F(\varphi_n)\Vert_{L^\infty(0,T;L^1(\Omega))}\leq C.\label{basicest3}
\end{align}
where henceforth $C=C\big(\Vert\varphi_0\Vert_V,\Vert\psi_0\Vert\big)$ denotes a nonnegative constant
depending on the norms of the initial data (and on $F$, $\Omega$).

%get $\varphi\in L^\infty(0,T;V)$, the first of \eqref{basicreg3} and \eqref{basicreg4}.
%Furthermore, we also g
%\begin{align}
%\nabla\mu\in L^2(0,T;H),\label{gradmu}&
%\end{align}
Let us now control the sequence of the averages of $\mu_n$. From \eqref{pbapp2} we get
\begin{align}
&|(\mu_n,1)|=|(F'(\varphi_n),1)|\leq {\color{black}c_6}\big(F(\varphi_n),1\big)+{\color{black}c_7}\leq C,\label{avcontr}
\end{align}
where {\color{black}$c_6,c_7$} are two nonnegative constants depending only on $F$, $\Omega$ and where we have used assumption
(F)
and \eqref{basicest3}.
%the fact that we have also $\Vert F(\varphi_n)\Vert_{L^\infty(0,T;L^1(\Omega))}\leq C$.
Therefore, the sequence of $\overline{\mu}_n$ is bounded in $L^\infty(0,T)$ and this bound, together
with the first of \eqref{basicest2} yields
\begin{align}
&\Vert\mu_n\Vert_{L^2(0,T;V)}\leq C.\label{basicregapp2}
\end{align}

%and we can also control the average of $\mu$ in $L^\infty(0,T)$ by integrating \eqref{eq2}
%over $\Omega$ and using (F). Indeed, we have
%\begin{align}
%&\Big|\int_\Omega\mu\Big|=\Big|\int_\Omega F'(\varphi)\Big|\leq C\int_\Omega F(\varphi)+C\leq C,\qquad\mbox{a.e. }t\in(0,T),
%\label{est11}
%\end{align}
%thanks to the first of \eqref{basicreg4}. Hence $\overline{\mu}\in L^\infty(0,T)$ and this bound, together
%with \eqref{gradmu}, yields \eqref{basicreg2}.

%We now prove that $\varphi\in L^\infty(0,T;V)\cap L^2(0,T;H^3(\Omega))$. This regularity will be achieved,
%for all $p\in [4,17/3)$, by means of an iteration argument.
We now prove that the sequence of $\varphi_n$ is controlled in $L^\infty(0,T;V)\cap L^2(0,T;H^3(\Omega))$.
Indeed, notice first that \eqref{pbapp2} can be written as
\begin{align}
&\mu_n=-\Delta\varphi_n+P_n F'(\varphi_n).\label{pbapp2bis}
\end{align}
Observe now that $\Vert P_n F'(\varphi_n)\Vert\leq \Vert F'(\varphi_n)\Vert$. Thus, the sequence
of $\varphi_n$ is bounded in $L^\infty(0,T;L^6(\Omega))$,  we deduce from \eqref{F1} the bound
\begin{align}
&\Vert F'(\varphi_n)\Vert_{L^\infty(0,T;H)}\leq C.\label{est37}
\end{align}
%the sequence of $F'(\varphi_n)$ is bounded in $L^\infty(0,T;H)$.
Hence, \eqref{pbapp2bis} and \eqref{basicregapp2}
entail that the sequence of $-\Delta\varphi_n+\varphi_n$ is bounded in $L^2(0,T;H)$ and, on account of the homogeneous
Neumann boundary condition for $\varphi_n$, a classical elliptic regularity result implies
\begin{align}
&\Vert\varphi_n\Vert_{L^\infty(0,T;V)\cap L^2(0,T;H^2(\Omega))}\leq C.\label{est16}
\end{align}
%Notice first that the fact that $\varphi\in L^\infty(0,T;L^6(\Omega))$ and \eqref{F1} imply that
%$F'(\varphi)\in L^\infty(0,T;L^{6/(p-1)}(\Omega))$. Hence, from \eqref{eq2} and \eqref{basicreg2}
%we get $-\Delta\varphi+\varphi\in L^2(0,T;L^{6/(p-1)}(\Omega))$, and since we have homogeneous
%Neumann boundary condition for $\varphi$, by applying a classical elliptic regularity result we deduce that
%\begin{align}
%&\varphi\in L^\infty(0,T;V)\cap L^2(0,T;W^{2,\frac{6}{p-1}}(\Omega)).\label{est16}
%\end{align}
By using inequality \eqref{GN}, we deduce from \eqref{est16} that the sequence of $\varphi_n$
is bounded in $L^{10}(Q)$
and moreover the sequence of $\nabla\varphi_n$ is bounded in $L^\infty(0,T;H)\cap L^2(0,T;V)
\hookrightarrow L^{10/3}(Q)$.
On the other hand,
{\color{black}
note that $\Vert A^{1/2}u\Vert^2=(Au,u)=\Vert\nabla u\Vert^2+\Vert u\Vert^2$, for all
$u\in D(A)$. Hence $\Vert A^{1/2}u\Vert\geq \Vert\nabla u\Vert$
(which holds, by density, also for all $u\in V=D(A^{1/2})$). Therefore}
we have
$$\Vert\nabla\big(P_n F'(\varphi_n)\big)\Vert\leq\Vert A^{1/2} P_n F'(\varphi_n)\Vert=\Vert  P_n A^{1/2} F'(\varphi_n)\Vert\leq\Vert\nabla F'(\varphi_n)\Vert+\Vert F'(\varphi_n)\Vert,$$
and hence
\eqref{F1} together with \eqref{est37} and $\eqref{basicest1}_1$ entail
\begin{align*}
\Vert P_n F'(\varphi_n)\Vert_{L^2(0,T;V)}
&\leq
\Vert F''(\varphi_n)\nabla\varphi_n\Vert_{L^{2}(Q)}
+\Vert F'(\varphi_n)\Vert_{L^{2}(Q)}\nonumber\\
&\leq \Vert F''(\varphi_n)\Vert_{L^5(Q)}\Vert\nabla\varphi_n\Vert_{L^{10/3}(Q)}
+\Vert F'(\varphi_n)\Vert_{L^{2}(Q)}\nonumber\\
&\leq c\big(1+\Vert\varphi_n\Vert_{L^{10}(Q)}^2\big)\Vert\nabla\varphi_n\Vert_{L^{10/3}(Q)}+
\Vert F'(\varphi_n)\Vert_{L^{2}(Q)}\nonumber\\
&\leq C.
\end{align*}
%\qquad s_0=\frac{2(9-p)}{p+1},
%and notice that $s_0\in (1,2]$ since we have assumed $p\in [4,17/3)$.
%Since we have also $F'(\varphi)\in L^\infty(0,T;L^{6/(p-1)}(\Omega))\hookrightarrow
%L^\infty(0,T;L^{s_0}(\Omega))$, due to the fact that $6/(p-1)>s_0$, then we get
%\begin{align}
%& F'(\varphi)\in L^{s_0}(0,T;W^{1,s_0}(\Omega)).
%\end{align}
By comparison in \eqref{pbapp2bis}, using \eqref{basicregapp2} and the elliptic regularity result again, we infer
\begin{align}
\Vert\varphi_n\Vert_{L^\infty(0,T;V)\cap L^2(0,T;H^3(\Omega))}\leq C.\label{est17}
\end{align}

We now deduce the estimates for the sequences of time derivatives
$\varphi_n'$ and $\psi_n'$. Take $\chi\in D(A)\hookrightarrow L^\infty(\Omega)$ and write it as
$\chi=\chi_1+\chi_2$, where $\chi_1=P_n\chi\in\mathcal{W}_n$ and $\chi_2\in (I-P_n)\chi\in\mathcal{W}_n^{\perp}$
(recall that $\chi_1$, $\chi_2$ are orthogonal in $H$, $V$ and $D(A)$).
Then, from \eqref{pbapp1} we have
\begin{align}
&\langle\varphi_n',\chi\rangle=\langle\varphi_n',\chi_1\rangle=-(\nabla\mu_n,\nabla\chi_1)
+\big(p(\varphi_n)(\psi_n-\mu_n),\chi_1\big),
\label{pbapp1bis}
\end{align}
and a similar identity follows from \eqref{pbapp3}.
% To this aim we have to estimate the sequence of
%$p(\varphi_n)(\psi_n-\mu_n)$.
Observe that
\begin{align*}
|\big(p(\varphi_n)(\psi_n-\mu_n),\chi_1\big)|
&\leq \Vert p(\varphi_n)\Vert_{L^{6/5}(\Omega)}\Vert\psi_n-\mu_n\Vert_{L^6(\Omega)}\Vert\chi_1\Vert_{L^\infty(\Omega)}\nonumber\\
&\leq c\Vert p(\varphi_n)\Vert_{L^{6/5}(\Omega)}\Vert\psi_n-\mu_n\Vert_{L^6(\Omega)}\Vert\chi\Vert_{D(A)}.
\end{align*}
%since assumption (P) implies that $\Vert p(\varphi_n)\Vert_{L^\infty(0,T;L^{3/2}(\Omega))}\leq C$.
The term $(\psi_n-\mu_n)$ is controlled in $L^2(0,T;L^6(\Omega))$, then we need to control the sequence of $p(\varphi_n)$
in $L^\sigma(0,T;L^{6/5}(\Omega))$ with some $\sigma>2$ in order to get the control of the sequences of $\varphi_n',\psi_n'$
%have the sequence of $p(\varphi_n)(\psi_n-\mu_m)$ controlled
in $L^r(0,T;D(A^{-1}))$ with some $r>1$.
{\color{black} To this aim notice that
from assumption (P) it follows
\begin{align}
&\Vert p(\varphi_n)\Vert_{L^\sigma(0,T;L^{6/5+\epsilon}(\Omega))}
\leq c(1+\Vert\varphi_n\Vert_{L^{\sigma q}(0,T;L^{6q/5+\epsilon q}(\Omega))}^q),
\label{est20}
\end{align}
where $\sigma> 2$ and $\epsilon> 0$.
On the other hand,}
 we know that the sequence of $\varphi_n$ is bounded in $L^\infty(0,T;V)\cap L^2(0,T;H^3(\Omega))$
(cf. \eqref{est17}), and, thanks to inequality \eqref{GN}, we have the following embedding
\begin{align}
&L^\infty(0,T;V)\cap L^2(0,T;H^3(\Omega))\hookrightarrow
L^{8\theta/(\theta-6)}(0,T;L^\theta(\Omega)),\quad \mbox{for } 6\leq\theta\leq\infty.\label{estro}
\end{align}
Hence, choosing $\theta=54/5$, we obtain
\begin{align}
&\Vert\varphi_n\Vert_{L^{18}(0,T;L^{54/5}(\Omega))}\leq C.\label{est39}
\end{align}
% the sequence of $\varphi_n$ is bounded in $L^{18}(0,T;L^{54/5}(\Omega))$
{\color{black}
Recalling that $q<9$, we can then fix $\sigma>2$ and $\epsilon>0$ such that
$\sigma q\leq 18$ and $6q/5+\epsilon q\leq 54/5$ (both $\sigma$ and $\epsilon$ depending on $q$). Thus we have
%\begin{align}
$L^{18}(0,T;L^{54/5}(\Omega))\hookrightarrow L^{\sigma q}(0,T;L^{6q/5+\epsilon q}(\Omega))$.
%\end{align}
%for some $\sigma >2$.
Therefore, on account of \eqref{est20} and \eqref{est39}, we get the desired control of
$p(\varphi_n)$ in $L^\sigma(0,T;L^{6/5}(\Omega))$ with some $\sigma>2$.
}
Summing up, we have proven the following bounds
\begin{align}
&\Vert\varphi_n'\Vert_{L^r(0,T;D(A^{-1}))}\leq C,\quad\Vert\psi_n'\Vert_{L^r(0,T;D(A^{-1}))}\leq C,
\quad\mbox{for some }r>1,\label{est26}
\end{align}
where we have used \eqref{est20} and $\eqref{basicest2}_1$ in \eqref{pbapp1bis}
to get the first bound and \eqref{est20} and $\eqref{basicest1}_2$ to obtain the second bound.

% Therefore, on account of
%\eqref{basicregapp2} and of the second of \eqref{basicest1}, from \eqref{pbapp1bis} we get
%\begin{align}
%&\Vert\varphi_n'\Vert_{L^2(0,T;V')}\leq C,\qquad \Vert\psi_n'\Vert_{L^2(0,T;V')}\leq C.
%\label{est26}
%\end{align}

We now deduce from estimates \eqref{basicest1}, \eqref{basicregapp2}, \eqref{est17} and \eqref{est26}
the existence of three functions $\varphi\in L^\infty(0,T;V)\cap L^2(0,T;H^3(\Omega))$,
$\psi\in L^\infty(0,T;H)\cap L^2(0,T;V)$ and $\mu\in L^2(0,T;V)$, with $\varphi_t,\psi_t\in L^r(0,T;D(A^{-1}))$
%such that,
%up to a subsequence, we have $\varphi_n
which are the (weak) limits (up to subsequences) of $\varphi_n$, $\psi_n$, $\mu_n$ and $\varphi_n',\psi_n'$, respectively.
%(in the spaces where the above estimates hold).
In order to pass to the limit in the approximate problem, we first observe that thanks to the compact
embedding
\begin{align*}
&L^\infty(0,T;V)\cap W^{1,r}(0,T;D(A^{-1}))\hookrightarrow\hookrightarrow C([0,T];L^\kappa(\Omega)),
\quad 2\leq\kappa<6
\end{align*}
given by the Aubin-Lions lemma (see, e.g., \cite{L}), we deduce that, up to a subsequence,
 $\varphi_n\to\varphi$ pointwise almost everywhere in $Q=\Omega\times(0,T)$.
Then, since $(\psi_n-\mu_n)$ converges weakly to $(\psi-\mu)$ in $L^2(0,T;L^6(\Omega))$,
in order to pass to the limit in $\big(p(\varphi_n)(\psi_n-\mu_n),w_j\big)$
on the right hand side of \eqref{pbapp1} and \eqref{pbapp3} it is enough that $p(\varphi_n)$
converges strongly to $p(\varphi)$ in $L^2(0,T;L^{6/5}(\Omega))$
{\color{black} (up to a subsequence). But we know that $p(\varphi_n)\to p(\varphi)$
pointwise almost everywhere in $Q$ and furthermore, from \eqref{est20}, \eqref{est39}
and the from embedding $L^{18}(0,T;L^{54/5}(\Omega))\hookrightarrow L^{\sigma q}(0,T;L^{6q/5+\epsilon q}(\Omega))$
(with $\sigma>2$ and $\epsilon>0$ fixed as above),
we have $p(\varphi_n)\rightharpoonup p(\varphi)$ weakly in $L^\sigma(0,T;L^{6/5+\epsilon}(\Omega))$. Hence we deduce}
%But $\varphi_n$ converges to $\varphi$ weakly in $L^{18}(0,T;L^{54/5}(\Omega))$. Hence,
%by also using the pointwise convergence of $\varphi_n$ to $\varphi$, we get
%\begin{align}
%\varphi_n\to\varphi,\quad\mbox{strongly in } L^\alpha(0,T;L^\beta(\Omega)),\quad 2\leq \alpha< 18,\quad 2\leq \beta<54/5.
%\label{est38}
%\end{align}
%Thus, on account of \eqref{est20} (written with $\sigma=2$) and of assumption $q<9$,
%by means of the generalized Lebesgue theorem we infer from \eqref{est38} that
\begin{align}
&p(\varphi_n)\to p(\varphi),\quad\mbox{strongly in } L^2(0,T;L^{6/5}(\Omega)).
\label{strconvp}
\end{align}

%we have $\varphi_n\to\varphi$ a.e. in $Q$. Furthermore, by Gagliardo-Nirenberg inequality
%$L^\infty(0,T;V)\cap L^2(0,T;H^3)$
%is embedded in $L^{8\rho/(\rho-6)}(0,T;L^\rho(\Omega))$ (for $6\leq\rho\leq\infty$) and
%hence the sequence of $\varphi_n$ converges weakly to $\varphi$ in $L^{18}(0,T;L^{54/5}(\Omega))$. Therefore we have
%\begin{align}
%\varphi_n\to\varphi,\qquad\mbox{strongly in } L^\alpha(0,T;L^\beta(\Omega)),\qquad 2\leq\forall\alpha< 18,\quad 2\leq\forall\beta<54/5,
%\end{align}
%and hence, on account of (P)
%Hence, on account of \eqref{est20} (written with $\sigma=2$) and of the fact that $q<9$,
%from this last convergence we infer that
%\begin{align}
%&p(\varphi_n)\to p(\varphi),\qquad\mbox{strongly in } L^2(0,T;L^{6/5}(\Omega)).
%\end{align}
This convergence, combined with the weak convergence $(\mu_n-\psi_n)\rightharpoonup (\mu-\psi)$ in $L^2(0,T;L^6(\Omega))$,
allows us to pass to the limit in the nonlinear term on the right hand side of \eqref{pbapp1} and \eqref{pbapp3}
{\color{black} (recall that $w_j\in C^1(\overline{\Omega})$, assuming that $\partial\Omega$ is smooth enough, e.g., $C^{2,1}$)}.
By means of the convergences deduced above
we can therefore pass to the limit in the approximate problem \eqref{pbapp1}--\eqref{pbapp4}
and deduce that $\varphi,\psi,\mu$ satisfy \eqref{wf1}--\eqref{wf2}.
%(with test functions for \eqref{eq1}, \eqref{eq3} in $D(A)$) .
The argument is standard and the details are left to the reader.

%We now prove that the weak solution just constructed satisfies
The energy inequality \eqref{enineq} can be proven by integrating in time
\eqref{enidapp} between $0$ and $t$ and passing to the limit as $n\to\infty$
in the resulting identity. The only nontrivial point is the following inequality
\begin{align}
&\int_0^t\int_\Omega p(\varphi)(\mu-\psi)^2\leq\liminf_{n\to\infty}\int_0^t\int_\Omega p(\varphi_n)(\mu_n-\psi_n)^2.
\label{liminf}
\end{align}
We know from \eqref{estro} written for $\theta=14$,
that the sequence of $\varphi_n$ is bounded in $L^{14}(Q)$ and hence, on account of (P), the sequence of
$\sqrt{p(\varphi_n)}$ is bounded in $L^{28/q}(Q)$. Since $\varphi_n\to\varphi$ also pointwise almost everywhere in $Q$,
then we have $\sqrt{p(\varphi_n)}\to\sqrt{p(\varphi)}$ strongly in $L^\gamma(Q)$, for every $\gamma<28/q$.
In particular we have $\sqrt{p(\varphi_n)}\to\sqrt{p(\varphi)}$ strongly in $L^3(Q)$.
Therefore, we have
\begin{align*}
\sqrt{p(\varphi_n)}(\mu_n-\psi_n)\rightharpoonup\sqrt{p(\varphi)}(\mu-\psi),\quad\mbox{ in }L^{6/5}(Q),
\end{align*}
and, due $\eqref{basicest2}_2$, this last weak convergence is also in $L^2(Q)$.
Hence, \eqref{liminf} follows.

Moreover, if $q\leq 4$ we can easily deduce the regularity $\varphi_t,\psi_t\in L^2(0,T;V')$
by comparison in the variational formulation of \eqref{eq1} and \eqref{eq3}.
Indeed, estimating the term $p(\varphi)(\psi-\mu)$ in $V'$, we get
\begin{align}
&\Vert p(\varphi)(\psi-\mu)\Vert_{V'}\leq c\Vert p(\varphi)\Vert_{L^{3/2}(\Omega)}\Vert\psi-\mu\Vert_{L^6(\Omega)}.
\label{est21}
\end{align}
But, since $q\leq 4$ and $\varphi\in L^\infty(0,T;L^6(\Omega))$, then assumption (P) implies that we have $p(\varphi)\in L^\infty(0,T;L^{3/2}(\Omega))$
and therefore, on account of \eqref{basicreg2} and of $\eqref{basicreg3}_1$, \eqref{est21} entails
\begin{align*}
&p(\varphi)(\psi-\mu)\in L^2(0,T;V').
\end{align*}
Hence, \eqref{basicreg5} follows immediately.
%With this regularity now available it is immediate to
%deduce the energy identity \eqref{enid} for the weak solution. Indeed, in this case we can choose
%$\mu$ and $\psi$ as test functions in the variational formulation of \eqref{eq1} and \eqref{eq3},
%respectively

{\color{black}
Finally,
let us take $\chi=\mu$ and $\xi=\psi$ in the variational formulation
\eqref{wf1}, \eqref{wf2}
%by choosing $\mu$ and $\psi$ as test functions in the variational formulation
of \eqref{eq1} and \eqref{eq3} (with test functions $\chi,\xi$ now in $V$), respectively,
and sum the resulting identities.
The choices for $\chi$ and $\xi$ are allowed since we have $\mu,\psi\in L^2(0,T;V)$.
Next, let us recall \eqref{basicreg5} for $\varphi_t,\psi_t$,
\eqref{optreg} and \eqref{basicreg3} for $\varphi,\psi$,
and the chain rule applied to the product $\langle\varphi_t,F'(\varphi)\rangle$,
noting that $F'(\varphi)\in L^2(0,T;V)$, to write the identities
\begin{align}
&\langle\varphi_t,\mu\rangle=\frac{1}{2}\frac{d}{dt}\Vert\nabla\varphi\Vert^2+\frac{d}{dt}\int_\Omega F(\varphi),\qquad
\langle\psi_t,\psi\rangle=\frac{1}{2}\frac{d}{dt}\Vert\psi\Vert^2.
\label{auxid}
\end{align}
Here we have used \cite[Proposition 4.2]{CKRS} and the fact that \eqref{F1} ensures that $F$ is a quadratic
perturbation of a convex function. Observe that the first term on the right hand side of $\eqref{auxid}_1$ can be justified by means of
a regularization argument which employs the time convolution of $\varphi$ by a family of mollifiers
(see, e.g., proof of \cite[Lemma 4.1]{TInf}).
Summing up, we obtain
\begin{align}
&\frac{d}{dt}\Big(\frac{1}{2}\Vert\nabla\varphi\Vert^2+\frac{1}{2}\Vert\psi\Vert^2+\int_\Omega F(\varphi)\Big)
+\Vert\nabla\mu\Vert^2+\Vert\nabla\psi\Vert^2+\int_\Omega p(\varphi)(\mu-\psi)^2=0.
\label{enid}
\end{align}
}
By integrating the energy identity \eqref{enid} in time between $0$ and $t$  we
deduce \eqref{enineq} with the equal sign for all $t>0$.
%satisfied by the weak solution.
This completes the proof of the theorem for the case {\color{black}$\rho\leq4$}.

%Hence, the proof is complete for the case $p=4$.

{\itshape Step II {\color{black}(case $4<\rho<6$).}}

\color{black}
%{\color{black}
%Now we can assume $c_3>0$ in \eqref{F2} (i.e., that the growth of $F$ is strictly
%greater than $4$).}
%Assume now $4<p<17/3$ and
In this case we first approximate the potential $F$ with a sequence of potentials $F_m\in C^2(\mathbb{R})$
satisfying the conditions stated in Lemma \ref{Fapproxim}.

\color{black}

Let us now consider problem \eqref{eq1}--\eqref{ics} with $F$ replaced by $F_m$ and call it Problem P$_m$.
Since $F_m$ satisfies condition (F) with $\rho\leq 4$
\color{black} {\color{black}(each $F_m$ has quadratic growth on
$\mathbb{R}$)}
\color{black}
 %is bounded on $\mathbb{R}$)
then, for each $m\in\mathbb{N}$,
Step I ensures the existence of a weak solution $[\varphi_m,\psi_m]$ to Problem P$_m$
such that $\varphi_m\in L^\infty(0,T;V)\cap L^2(0,T;H^3(\Omega))$,
$\psi_m\in L^\infty(0,T;H)\cap L^2(0,T;V)$,
$\mu_m\in L^2(0,T;V)$ and satisfying the energy inequality \eqref{enineq}.

%By integrating \eqref{enid} (written for each weak solution $\varphi_m, \psi_m$) in time between $0$ and $t$,
Due to \eqref{enineq} (written for each solution $\varphi_m, \psi_m$
\color{black} with $F_m$ in place of $F$\color{black}), assumptions (F) and (P), $\eqref{Fm}_1$ \color{black} and \eqref{equicoer}, \color{black} we can argue as for the Faedo-Galerkin approximating solutions $[\varphi_n,\psi_n]$
(cf. Step I) and we can still recover the basic estimates \eqref{basicest1}, \eqref{basicregapp2} for the sequences of $\varphi_m$ and $\psi_m$ (notice that in Problem P$_m$ the initial conditions are not approximated).

We now show that the sequence $\varphi_m$ is still controlled in $L^\infty(0,T;V)\cap L^2(0,T;H^3(\Omega))$.
This bound will be achieved through an iteration argument.

{\color{black}
Notice first that the sequence $\varphi_m$ is bounded in $L^\infty(0,T;V)\cap L^2(0,T;H^2(\Omega))$.
Indeed, by multiplying the identity $\mu_m=-\Delta\varphi_m+F_m'(\varphi_m)$
by $\Delta\varphi_m$ we obtain
\begin{align}
&\Vert\Delta\varphi_m\Vert^2=-(\mu_m,\Delta\varphi_m)+(F_m'(\varphi_m),\Delta\varphi_m)\nonumber\\
&\leq\frac{1}{2}\Vert\mu_m\Vert^2+\frac{1}{2}\Vert\Delta\varphi_m\Vert^2-\int_\Omega F_m''(\varphi_m)|\nabla\varphi_m|^2. \nonumber
\end{align}
By using $\eqref{equicoer}_2$, this last estimate yields
\begin{align}
&\Vert\Delta\varphi_m\Vert^2\leq\Vert\mu_m\Vert^2+2k_5\Vert\nabla\varphi_m\Vert^2.\label{est46}
\end{align}
The desired bound of $\varphi_m$ in $L^\infty(0,T;V)\cap L^2(0,T;H^2(\Omega))$ then follows from \eqref{est46} by
applying the basic estimates \eqref{basicest1}, \eqref{basicregapp2} and elliptic regularity.

Using the obtained bound and interpolation (cf. \eqref{est44} below), we see that
the sequence of $\varphi_m$ is bounded in $L^{2(\rho-1)}(0,T;L^{6(\rho-1)/(\rho-3)}(\Omega))$ as well.
Hence, \eqref{F1} together with $\eqref{Fm}_2$ imply that
the sequence of $F_m'(\varphi_m)$ is bounded in $L^2(0,T;L^{6/(\rho-3)}(\Omega))$.
Therefore, from \eqref{eq2} and \eqref{basicregapp2}
we infer that the sequence of $-\Delta\varphi_m+\varphi_m$ is bounded in $L^2(0,T;L^{6/(\rho-3)}(\Omega))$.
Then, by using elliptic regularity theory (see, e.g., \cite{Ag, GT, Ne}) we get
\begin{align}
&\Vert\varphi_m\Vert_{L^\infty(0,T;V)\cap L^2(0,T;W^{2,\frac{6}{\rho-3}}(\Omega))}\leq C.\label{est27}
\end{align}

}
%Notice first that the fact that the sequence of $\varphi_m$ is bounded in $L^\infty(0,T;L^6(\Omega))$ and \eqref{F1}
%together with the second of \eqref{Fm} imply that
%the sequence of $F_m'(\varphi_m)$ is bounded in $L^\infty(0,T;L^{6/(\rho-1)}(\Omega))$. Hence, from \eqref{eq2} and \eqref{basicregapp2}
%we infer that the sequence of $-\Delta\varphi_m+\varphi_m$ is bounded in $L^2(0,T;L^{6/(\rho-1)}(\Omega))$, and due to the homogeneous Neumann boundary %condition for $\varphi_m$, by using elliptic regularity theory
%(see, e.g., \cite{Ag, GT, Ne}) we get
%\begin{align}
%&\Vert\varphi_m\Vert_{L^\infty(0,T;V)\cap L^2(0,T;W^{2,\frac{6}{\rho-1}}(\Omega))}\leq C.\label{est27}
%\end{align}
Thanks to inequality \eqref{GN},  we deduce from \eqref{est27} that the sequence of $\varphi_m$ is bounded in {\color{black} $L^{2(11-\rho)}(Q)$}.
Moreover, $\nabla\varphi_m$ is bounded in {\color{black}$L^\infty(0,T;H)\cap L^2(0,T;W^{1,6/(\rho-3)}(\Omega))
\hookrightarrow L^{2(11-\rho)/3}(Q)$}. Therefore, using $\eqref{Fm}_3$ and \eqref{F1} we get
\begin{align*}
\Vert\nabla F_m'(\varphi_m)\Vert_{L^{s_0}(Q)}&
\leq k_2\Vert F''(\varphi_m)\nabla\varphi_m\Vert_{L^{s_0}(Q)}\nonumber\\
&\leq k_2 \Vert F''(\varphi_m)\Vert_{{\color{black}L^{2(11-\rho)/(\rho-2)}(Q)}}
\Vert\nabla\varphi_m\Vert_{{\color{black}L^{2(11-\rho)/3}(Q)}}\nonumber\\
&\leq c\big(1+\Vert\varphi_m\Vert_{{\color{black}L^{2(11-\rho)}(Q)}}^{\rho-2}\big)
\Vert\nabla\varphi_m\Vert_{{\color{black}L^{2(11-\rho)/3}(Q)}}\nonumber\\
&\leq C,\quad {\color{black}s_0=\frac{2(11-\rho)}{\rho+1}}.
\end{align*}

%Notice that {\itshape $s_0\in (1,2]$ since $\rho\in [4,17/3)$}.
{\color{black}
In addition,
%owing to $\eqref{Fm}_2$, \eqref{F1} and \eqref{est27},
we know that the sequence of $F_m'(\varphi_m)$ is bounded in $L^2(0,T;L^{6/(\rho-3)}(\Omega))$.
Let us now first consider the case $4<\rho\leq 5$. In this case we have $s_0\in [2,14/5)$
%and hence also in $L^{s_0}(0,T;L^{s_0}(\Omega))$ (since $6/(\rho-1)>s_0$)}. Thus we obtain
and since $F_m'(\varphi_m)$ is bounded in $L^2(0,T;L^2(\Omega))$, we obtain
\begin{align*}
&\Vert F_m'(\varphi_m)\Vert_{L^2(0,T;V)}\leq C.
\end{align*}}
By comparison in \eqref{eq2} and using \eqref{basicregapp2} and elliptic regularity again, we deduce
{\color{black}the desired bound}
\begin{align}
\Vert\varphi_m\Vert_{L^\infty(0,T;V)\cap {\color{black}L^2(0,T;H^3(\Omega))}}\leq C.\label{est17bis}
\end{align}
{\color{black}
On the other hand, if $5<\rho<6$, then $s_0\in (10/7,2)$. In this case the sequence of $F_m'(\varphi_m)$
is still bounded in $L^2(0,T;L^2(\Omega))$, but we have
\begin{align*}
&\Vert F_m'(\varphi_m)\Vert_{L^{s_0}(0,T;W^{1,s_0}(\Omega))}\leq C.
\end{align*}
By comparison in \eqref{eq2} and using \eqref{basicregapp2} and elliptic regularity again, we now deduce
\begin{align}
\Vert\varphi_m\Vert_{L^\infty(0,T;V)\cap L^{s_0}(0,T;W^{3,s_0}(\Omega))}\leq C.\label{est17tris}
\end{align}}
{\color{black}In this case} we can repeat the argument above and improve the estimates for the sequence of $\varphi_m$
by means of a bootstrap procedure performed for a finite number of steps.
Indeed, observe first that, thanks to \eqref{GN}, we have (for any $s\in (1,2]$)
\begin{align}
&\mathbb{X}_s:=L^\infty(0,T;V)\cap L^s(0,T;W^{3,s}(\Omega))\hookrightarrow L^{7s}(Q),\label{est18}\\
&\mathbb{Y}_s:=L^\infty(0,T;H)\cap L^s(0,T;W^{2,s}(\Omega))\hookrightarrow L^{\frac{7}{3}s}(Q)\label{est19}.
\end{align}
Taking {\color{black}\eqref{est17tris}}--\eqref{est19} into account,
 the sequences of $\varphi_m$ and $\nabla\varphi_m$ are bounded in $L^{7 s_0}(Q)$ and in $L^{7s_0/3}(Q)$, respectively. Hence, by means of \eqref{F1} and $\eqref{Fm}_3$, we
have
\begin{align*}
\Vert\nabla F_m'(\varphi_m)\Vert_{L^{7s_0/(\rho+1)}(Q)}\leq k_2\Vert F''(\varphi_m)\nabla\varphi_m\Vert_{L^{7s_0/(\rho+1)}(Q)}\leq C.
\end{align*}
On the other hand, $F_m'(\varphi_m)$
is bounded in $L^2(0,T;L^2(\Omega))$
%and hence also in $L^{7s_0/(\rho+1)}(Q)$.
and hence also in $L^{s_1}(Q)$, where $s_1=\min\{2,7s_0/(\rho+1)\}$.
%since we also have (for any $s\in (1,2]$)
% \begin{align*}
% \mathbb{Z}_s:=L^\infty(0,T;L^{\frac{6}{\rho-1}}(\Omega))\cap L^s(0,T;W^{1,s}(\Omega))\hookrightarrow L^{\frac{\rho+1}{\rho-1}s}(Q)
% \hookrightarrow L^{\frac{7}{\rho+1}s}(Q),
% \end{align*}
%and since $F_m'(\varphi_m)$ is bounded in $\mathbb{Z}_{s_0}$
%thanks to \eqref{est17bis}, \eqref{est27}, \eqref{F1} and $\eqref{Fm}_2$,
%$F_m'(\varphi_m)$ is bounded in $L^{7s_0/(\rho+1)}(Q)$.
We therefore deduce that
\begin{align*}
&\Vert F_m'(\varphi_m)\Vert_{L^{s_1}(0,T;W^{1,s_1}(\Omega))}\leq C,\quad s_1:=\min\Big\{2,\frac{7}{\rho+1}s_0\Big\}.
\end{align*}
If $s_1=2$, then by comparison in \eqref{eq2} and using \eqref{basicregapp2} and
elliptic regularity,
%we are done, namely
we get the desired bound for the sequence of $\varphi_m$ in $L^\infty(0,T;V)\cap L^2(0,T;H^3(\Omega))$.
If $s_1<2$ then, by comparison in \eqref{eq2} and using \eqref{basicregapp2} and
elliptic regularity, we infer
\begin{align*}
\Vert\varphi_m\Vert_{L^\infty(0,T;V)\cap L^{s_1}(0,T;W^{3,s_1}(\Omega))}\leq C.
\end{align*}
Repeating the argument we now have the sequences of $\varphi_m$ and $\nabla\varphi_m$ bounded in $\mathbb{X}_{s_1}$
and in $\mathbb{Y}_{s_1}$, respectively, and hence
$\Vert\nabla F_m'(\varphi_m)\Vert_{L^{7s_1/(\rho+1)}(Q)}\leq C$. Moreover, we know that the sequence of $F_m'(\varphi_m)$
%is bounded in $L^{7s_1/(\rho+1)}(Q)$. This implies
{\color{black}
is bounded in $L^{s_2}(Q)$, where $s_2=\min\{2,7s_1/(\rho+1)\}$.
} This implies
\begin{align*}
&\Vert F_m'(\varphi_m)\Vert_{L^{s_2}(0,T;W^{1,s_2}(\Omega))}\leq C,
\quad {\color{black} s_2:=\min\Big\{2,\Big(\frac{7}{\rho+1}\Big)^2 s_0\Big\}.}
\end{align*}
Again, if {\color{black} $s_2=2$} we get the desired claim; otherwise, by using elliptic regularity we infer that the sequence of $\varphi_m$
is bounded in $\mathbb{X}_{s_2}$
and we repeat the previous argument. By iterating the procedure $k$ times
% (assuming in particular that $s_0<2$)
we get
\begin{align*}
&\Vert F_m'(\varphi_m)\Vert_{L^{s_k}(0,T;W^{1,s_k}(\Omega))}\leq C,\quad
{\color{black} s_k:=\min\Big\{2,\Big(\frac{7}{\rho+1}\Big)^k s_0\Big\}.}
\end{align*}
{\color{black}{\itshape Since $\rho<6$,}} {\color{black}
after a finite number of steps, as soon as we get $s_k=2$, the bootstrap procedure ends yielding
the bound of the sequence of $\varphi_m$ in $L^\infty(0,T;V)\cap L^2(0,T;H^3(\Omega))$
(which cannot be improved since the regularity of $\varphi_m$
is related through \eqref{eq2} to $\mu_m\in L^2(0,T;V)$).}
% Of course,
%in the case $p=4$, no bootstrap argument is needed (since we have $s_0=2$).

%In order to see that $\varphi\in L^2(0,T;H^3(\Omega))$, notice first that, thanks to (P) and to the fact
%that $\varphi\in L^\infty(0,T;L^6(\Omega))$ (cf. the first of \eqref{basicreg1}), we have $F'(\varphi)\in L^\infty(0,T;H)$.
%Hence $\Delta\varphi\in L^2(0,T;H)$, and since we also have $-\Delta\varphi+\varphi\in L^2(0,T;H)$
%and $\partial_n\varphi=0$ on $\partial\Omega$, by a classical elliptic regularity
%result and assumig $\Omega$ smooth enough, we deduce that $\varphi\in L^2(0,T;H^2(\Omega))$.
%Now, we claim that $F'(\varphi)\in L^2(0,T;V)$.
%Indeed, we have
%\begin{align}
%&\Vert\nabla F'(\varphi)\Vert^2=\Vert F''(\varphi)\nabla\varphi\Vert^2
%\leq C\int_\Omega(1+\varphi^4)|\nabla\varphi|^2\nonumber\\
%&\leq C\big(\Vert\nabla\varphi\Vert^2+\Vert\varphi\Vert_{L^{10}}^4\Vert\nabla\varphi\Vert_{L^{10/3}}^2\big)
%\label{est15}
%\end{align}
%Now, by interpolation and using Gagliardo-Nirenberg inequality \eqref{} we can see that
% $\varphi\in L^\infty(0,T;V)\cap L^2(0,T;H^2(\Omega))\hookrightarrow L^{10}(0,T;L^{10}(\Omega))$.
%Furthermore, we have $\nabla\varphi\in L^\infty(0,T;L^2(\Omega))\cap L^2(0,T;L^6(\Omega))\hookrightarrow L^{10/3}(0,T;L^{10/3}(\Omega))$,
%and so we deduce that $\nabla F'(\varphi)\in L^2(0,T;H)$ and $F'(\varphi)\in L^2(0,T;V)$.
%Hence, we now have $\Delta\varphi\in L^2(0,T;V)$
%and by applying again the elliptic regularity result for the homogeneous Neumann problem
%we finally deduce the desired bound $\varphi\in L^2(0,T;H^3(\Omega))$.
{\color{black}
As far as the estimates for
the sequences of time derivatives $\varphi_m'$, $\psi_m'$ are concerned, the argument is exactly the same
as for the sequences of time derivatives $\varphi_n'$, $\psi_n'$ of the Faedo-Galerkin approximating solutions
(cf. Step I). Hence, \eqref{est26} still holds for $\varphi_m'$, $\psi_m'$.}
{\color{black}
Finally, the passage to the limit in Problem P$_m$ (notice that $F_m'(\varphi_m)\to F'(\varphi)$ pointwise almost everywhere in $Q$), the proof of the energy inequality \eqref{enineq} for $q\in[1,9)$, the proofs of \eqref{basicreg5}
and of the energy identity for $q\leq 4$ can be carried out along as done at the end of Step I.}\hspace{132mm}$\boxempty$
%\begin{flushright}$\boxempty$\end{flushright}
{\color{black}
The existence of a weak solution without the the optimal regularity $\varphi\in L^2(0,T;H^3(\Omega))$ can still be ensured under a more general assumption on $F$, provided we impose a slight restriction (i.e., $q<7$) on the growth of $p$.}
{\color{black}
More precisely, we have the following}
\begin{cor}\label{cor}
Assume that $F\in C^2(\mathbb{R})$ satisfies
\begin{description}
\item[(F)$_1$] $F''(s)\geq -\lambda_1$,
\item[(F)$_2$] $|F'(s)|\leq\lambda_2 F(s)+\lambda_3$,
\end{description}
{\color{black}
for all $s\in\mathbb{R}$, where $\lambda_1,\lambda_2,\lambda_3$ are some nonnegative constants.
Moreover, assume that $p\in C^{0,1}_{loc}(\mathbb{R})$ satisfies \eqref{assp} with $q\in [1,7)$.
Let $\varphi_0\in V$ and $\psi_0\in H$. Then, for every $T>0$ Problem \eqref{eq1}-\eqref{ics}
admits a weak solution on $[0,T]$ satisfying \eqref{basicreg1}--\eqref{basicreg3}, \eqref{basicreg4}
and the energy inequality \eqref{enineq}.
Finally, if $q\leq 4$, then we have \eqref{basicreg5} and \eqref{enineq} holds with the equality sign.}
\end{cor}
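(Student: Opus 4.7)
The plan is to mirror the proof of Theorem~\ref{existence}, Step~II, approximating $F$ with a sequence of smooth potentials $F_m$ of at most quadratic growth, but without the bootstrap that produced the $L^2(0,T;H^3(\Omega))$-regularity of $\varphi$ (this is where the mild restriction $q<7$, rather than $q<9$, enters). Since (F)$_1$ ensures that $F_0(s):=F(s)+(\lambda_1/2)s^2$ is convex and $C^2$, I would Yosida-regularise $F_0'$, set $F_{0,m}(s):=\int_0^s(F_0')_m(\sigma)\,d\sigma$ and define $F_m:=F_{0,m}-(\lambda_1/2)s^2$. The same computations as in Lemma~\ref{Fapproxim} show that each $F_m$ is $C^2$ with quadratic growth, $F_m\to F$ pointwise and $F_m''\geq-\lambda_1$. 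Using $|J_m(s)|\leq|s|$ and $F_{0,m}(s)\geq F_0(J_m(s))$, condition (F)$_2$ transfers to $F_m$ in the weakened form $|F_m'(s)|\leq\lambda_2 F_m(s)+\lambda_3+C(1+|s|)$ with $C$ independent of $m$. Note also that (F)$_2$ forces $F$ (and hence $F_m$) to be bounded below by a constant, which supplies the lower-energy control that elsewhere was provided by the coercivity $F(s)\geq c_3|s|-c_4$.

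Since each $F_m$ has quadratic growth, Step~I of the proof of Theorem~\ref{existence} produces a weak solution $[\varphi_m,\psi_m]$ to the problem with $F$ replaced by $F_m$, satisfying the energy inequality \eqref{enineq}. From it I obtain uniform bounds on $\varphi_m$ in $L^\infty(0,T;V)$, on $\psi_m$ in $L^\infty(0,T;H)\cap L^2(0,T;V)$, on $\nabla\mu_m$ in $L^2(0,T;H)$, on $\sqrt{p(\varphi_m)}(\mu_m-\psi_m)$ in $L^2(0,T;H)$, and on $\int F_m(\varphi_m)$ in $L^\infty(0,T)$. The transferred version of (F)$_2$ above plays the role of \eqref{avcontr}, yielding $\overline\mu_m$ bounded in $L^\infty(0,T)$ and hence $\mu_m$ in $L^2(0,T;V)$. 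Testing $\mu_m=-\Delta\varphi_m+F_m'(\varphi_m)$ against $-\Delta\varphi_m$ and invoking only the semiconvexity $F_m''\geq-\lambda_1$ gives
\begin{equation*}
\|\Delta\varphi_m\|^2\leq\|\mu_m\|^2+2\lambda_1\|\nabla\varphi_m\|^2,
\end{equation*}
whence by elliptic regularity $\varphi_m$ is bounded in $L^\infty(0,T;V)\cap L^2(0,T;H^2(\Omega))$. This is the maximal regularity accessible here, since the absence of an upper bound on $F_m''$ prevents the $H^3$-bootstrap of Step~II.

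Applying \eqref{GN} to this bound yields the embedding into $L^{4a/(a-6)}(0,T;L^a(\Omega))$ for every $a\in(6,\infty)$. Taking $a=(6/5+\epsilon)q$, a direct computation shows that $q<7$ is precisely what permits the choice of $\sigma>2$ and $\epsilon>0$ small so that $p(\varphi_m)$ is bounded in $L^\sigma(0,T;L^{6/5+\epsilon}(\Omega))$ uniformly in $m$; this substitutes for \eqref{est20}--\eqref{est39}. The estimates \eqref{est26} on $\varphi_m'$, $\psi_m'$ in $L^r(0,T;D(A^{-1}))$ for some $r>1$ then follow exactly as in Step~I. Passage to the limit proceeds as in Step~II: Aubin--Lions provides a.e.\ convergence of $\varphi_m$ in $Q$, hence $F_m'(\varphi_m)\to F'(\varphi)$ and $p(\varphi_m)\to p(\varphi)$ a.e.; the uniform bounds upgrade these to the strong/weak convergences needed in $p(\varphi_m)(\psi_m-\mu_m)$; the energy inequality passes to the limit by the same liminf argument; and the addendum for $q\leq 4$, i.e.\ \eqref{basicreg5} and the energy identity, is inherited verbatim.

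The main obstacle is establishing that the Yosida-based approximation preserves (F)$_2$ in a form strong enough to control $\overline{\mu}_m$ uniformly when $F$ is allowed to grow as fast as $\exp(s)$, so that no polynomial bound on $F$, $F'$ or $F''$ can be used; everything else is a direct transcription of the machinery already developed for Theorem~\ref{existence}, with the threshold $q<7$ arising sharply from the loss of the $H^3$-bootstrap and the Gagliardo--Nirenberg scaling described above.
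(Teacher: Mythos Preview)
Your approach is correct but genuinely different from the paper's. The paper does \emph{not} approximate $F$; it runs the Faedo--Galerkin scheme of Step~I directly on the original $F$, first for $\varphi_0\in D(A)$ (so that $\int_\Omega F(\varphi_{0n})$ is controlled via $P_n\varphi_0\to\varphi_0$ in $L^\infty$) and then recovers $\varphi_0\in V$ by density. The key observation is that at the Galerkin level one can test $\mu_n=-\Delta\varphi_n+P_nF'(\varphi_n)$ by $\Delta\varphi_n\in\mathcal{W}_n$ and, because $\Delta\varphi_n$ already lies in the finite-dimensional space, the projector $P_n$ drops out: $(P_nF'(\varphi_n),\Delta\varphi_n)=(F'(\varphi_n),\Delta\varphi_n)=-\int_\Omega F''(\varphi_n)|\nabla\varphi_n|^2$. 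Thus only the lower bound (F)$_1$ on $F''$ is needed to obtain $\|\Delta\varphi_n\|^2\leq\|\mu_n\|^2+2\lambda_1\|\nabla\varphi_n\|^2$, with no upper growth assumption on $F$ whatsoever. Your route---Yosida-regularise $F$, invoke Step~I as a black box for each $F_m$, then pass to the limit in $m$---is more modular and avoids the initial restriction to $\varphi_0\in D(A)$ (since each $F_m$ has quadratic growth, $\int_\Omega F_m(\varphi_0)$ is automatically finite and dominated by $\int_\Omega F(\varphi_0)$), at the cost of an extra layer of approximation and the verification that (F)$_2$ transfers to $F_m$. On that last point, your claimed bound $|F_m'(s)|\leq\lambda_2 F_m(s)+\lambda_3+C(1+|s|)$ is slightly off: the computation $F(J_m(s))\leq F_m(s)+(\lambda_1/2)s^2$ actually produces an extra $s^2$ term, so the inequality reads $|F_m'(s)|\leq\lambda_2 F_m(s)+C(1+s^2)$; this is harmless for controlling $\overline{\mu}_m$ because $\varphi_m\in L^\infty(0,T;H)$. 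Both routes land on the same $L^\infty(0,T;V)\cap L^2(0,T;H^2(\Omega))$ bound for $\varphi$ and the same Gagliardo--Nirenberg computation (the paper uses $\eta=42/5$ in \eqref{est44}) that pins down the threshold $q<7$.
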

\begin{proof}
{\color{black}
We can follow the Faedo-Galerkin approximation procedure in Step I of the proof of Theorem \ref{existence},
assuming first that $\varphi_0\in D(A)$ in order to control the sequence of $\int_\Omega F(\varphi_{0n})$
in the identity obtained by integrating \eqref{enidapp} in time. Existence of weak solution in the case $\varphi_0\in V$
can then be recovered by means of a density argument. The basic estimates \eqref{basicest1}--\eqref{basicest3}
still hold, as well as the controls \eqref{avcontr}, ensured by (F)$_2$, and \eqref{basicregapp2}.
As far as estimate \eqref{est16} is concerned, this can now be recovered by using (F)$_1$. Indeed,
multiplying \eqref{pbapp2bis} by $\Delta\varphi_n$ in $H$ we get}
{\color{black}
\begin{align*}
\Vert\Delta\varphi_n\Vert^2
&=-(\mu_n,\Delta\varphi_n)+\big(P_n F'(\varphi_n),\Delta\varphi_n\big)\\
&=-(\mu_n,\Delta\varphi_n)-\int_\Omega F''(\varphi_n)|\nabla\varphi_n|^2,
\end{align*}
}
which yields
\begin{align*}
&\Vert\Delta\varphi_n\Vert^2\leq\Vert\mu_n\Vert^2+2\lambda_1\Vert\nabla\varphi_n\Vert^2.
\end{align*}
{\color{black}Estimate \eqref{est16} then follows from this last inequality by using \eqref{basicregapp2}, the first
of \eqref{basicest1} and elliptic regularity.}

{\color{black}
Next, in order to get the control of the sequences of time derivatives $\varphi_n',\psi_n'$
in the space $L^r(0,T;D(A^{-1}))$, for some $r>1$, and in order to pass to the limit in the approximate problem
\eqref{pbapp1}-\eqref{pbapp4} we can still argue as in Step I of the proof of Theorem \ref{existence},
with the difference that now we can only rely in the control given by \eqref{est16}, together
with the following embedding}
\begin{align}
&L^\infty(0,T;V)\cap L^2(0,T;H^2(\Omega))\hookrightarrow L^{4\eta/(\eta-6)}(0,T; L^\eta(\Omega)),\quad\mbox{for } 6\leq\eta\leq\infty.
\label{est44}
\end{align}
{\color{black}Indeed, by using \eqref{est44} with $\eta=42/5$ we can easily see that, since $q\in[1,7)$,
estimates \eqref{est26} and the strong convergence \eqref{strconvp} still hold.}

{\color{black}
As far as the energy inequality \eqref{enineq} is concerned, let us observe that
the sequence of $\varphi_n$ is now bounded in $L^{10}(Q)$ (cf. \eqref{est16} and \eqref{est44}
with $\eta=10$). Hence, on account of \eqref{assp} and of pointwise convergence
we have}
%Hence, on account of \eqref{assp}, the sequence of $\sqrt{p(\varphi_n)}$
%is bounded in $L^{20/q}(Q)$ and, thanks to the pointwise convergence we have
$\sqrt{p(\varphi_n)}\to \sqrt{p(\varphi)}$ strongly in $L^\delta(Q)$, for every $\delta<20/q$.
In particular we have
$\sqrt{p(\varphi_n)}\to \sqrt{p(\varphi)}$ strongly in $L^{5/2}(Q)$, which implies that
\begin{align*}
&\sqrt{p(\varphi_n)}(\mu_n-\psi_n)\rightharpoonup \sqrt{p(\varphi)}(\mu-\psi),\quad\mbox{ in } L^{10/9}(Q).
\end{align*}
Due to $\eqref{basicest2}_3$, this weak convergence also holds in $L^2(Q)$ and
still yields \eqref{liminf} and then \eqref{enineq} as well.

Finally, assume that $q\leq 4$. By arguing as in Step I of the proof of Theorem \ref{existence}
we again deduce \eqref{basicreg5}.
{\color{black}In order to prove that \eqref{enineq} holds with the equality
sign,
let us first observe that from assumption (F) we have $F''(s)\geq-c_\ast$, for some $c_\ast\in\mathbb{R}$,
 and therefore we can write $F$ as
$$F(s)=G_0(s)-c_\ast\frac{s^2}{2},$$
where $G_0\in C^2(\mathbb{R})$ is convex.
Introduce now the functional
$\mathcal{G}_0:H\to\mathbb{R}\cup\{+\infty\}$ given by
$$\mathcal{G}_0(\varphi)=\int_\Omega\Big(\frac{1}{2}|\nabla\varphi|^2+G_0(\varphi)\Big),
\qquad\mbox{if }\varphi\in V\mbox{ and } G_0(\varphi)\in L^1(\Omega),$$
and $\mathcal{G}_0(\varphi)=+\infty$ otherwise.
Then, $\mathcal{G}_0$ is convex and lower semicontinuous on $H$ and
we have (see, e.g., \cite[Proposition 2.8]{Bar})
$$\partial\mathcal{G}_0(\varphi)=-\Delta\varphi+G_0'(\varphi),\qquad\forall
\varphi\in D(\partial\mathcal{G}_0)=D(A).$$
Since $\partial\mathcal{G}_0(\varphi)=-\Delta\varphi+G_0'(\varphi)=\mu+c_\ast\varphi
%c_1\varphi
\in L^2(0,T;V)$, then we can apply
\cite[Proposition 4.2]{CKRS} and write
\begin{align*}
&\langle\varphi_t,\mu\rangle=\langle\varphi_t,\partial\mathcal{G}_0(\varphi)-c_\ast\varphi\rangle
=\frac{d}{dt}\mathcal{G}_0(\varphi)-\frac{c_\ast}{2}\frac{d}{dt}\Vert\varphi\Vert^2
=\frac{d}{dt}\int_\Omega\Big(\frac{1}{2}|\nabla\varphi|^2+F(\varphi)\Big).
\end{align*}
This identity allows to recover \eqref{enid}, and hence \eqref{enineq} with the equality sign,
 by arguing exactly as at the end of Step I
of the proof of Theorem \ref{existence}.} \end{proof}

\color{black}
%if $\rho\geq 17/3$, then it is easy to see that the proof of Theorem \ref{existence}
%can be substantially repeated, under some further restrictions on the growth $q$, except of what concerns the $L^2(H^3)$ regularity of $\varphi$.
%rom the proof below it is easy to see that existence of a weak solution
%still holds (under an additional assumption on $q$) but
%such optimal regularity is not ensured any more.
%}
%\label{arbpoly}
%\end{oss}

The next result is concerned with the uniqueness of weak solutions and their continuous dependence with respect to the initial data. In order to prove such a result assumption (F) still suffices, but we need to strengthen (P) as follows
\begin{description}
\item[(P1)]
Let $p\in C^{0,1}_{loc}(\mathbb{R})$  be such that $p\geq 0$ and
\begin{align*}
&|p'(s)|\leq c_5(1+|s|^{q-1}),
\end{align*}
for almost any $s\in\mathbb{R}$, with $1\leq q\leq 4$.
\end{description}
Then we have
\begin{thm}
\label{uniqueness}
Assume that (F) and (P1) are satisfied. Let $\varphi_0\in V$ and $\psi_0\in H$. Then, for every $T>0$
the weak solution to Problem \eqref{eq1}--\eqref{ics} on $[0,T]$ given by Theorem \ref{existence}
is unique. Moreover, let $[\varphi_{0i},\psi_{0i}]\in V\times H$, be two initial data
and $[\varphi_i,\psi_i]$, $i=1,2$ be the corresponding weak solutions. Then, the following
continuous dependence estimate holds
\begin{align*}
&\Vert\varphi_2(t)-\varphi_1(t)\Vert_{V'}+\Vert\psi_2(t)-\psi_1(t)\Vert_{V'}
+\Vert\varphi_2-\varphi_1\Vert_{L^2(0,t;V)}+\Vert\psi_2-\psi_1\Vert_{L^2(0,t;H)}\nonumber\\
&\leq\Lambda(t)\big(\Vert\varphi_{02}-\varphi_{01}\Vert_{V'}+\Vert\psi_{02}-\psi_{01}\Vert_{V'}\big),\quad\forall t\in [0,T],
\end{align*}
where $\Lambda$ is a continuous positive function which depends on the norms of the initial data and on $F$, $p$, $\Omega$ and $T$.
\end{thm}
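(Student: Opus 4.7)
Set $\phi:=\varphi_2-\varphi_1$, $\eta:=\psi_2-\psi_1$, $m:=\mu_2-\mu_1$. The plan is to subtract the two weak formulations, test in the Hilbert triplet $(V,H,V')$, and close a Gronwall-type inequality for $\|\phi\|_{V'}^2+\|\eta\|_{V'}^2$. Since (P1) enforces $q\leq 4$, the last part of Theorem~\ref{existence} gives $\varphi_{i,t},\psi_{i,t}\in L^2(0,T;V')$ and allows test functions in $V$. The difference equations
\begin{align*}
&\phi_t-\Delta m=P,\qquad m=-\Delta\phi+F'(\varphi_2)-F'(\varphi_1),\qquad \eta_t-\Delta\eta=-P,
\end{align*}
hence hold in $V'$ for a.e.\ $t$, with $P:=p(\varphi_2)(\psi_2-\mu_2)-p(\varphi_1)(\psi_1-\mu_1)$.

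I would test the first equation by $A^{-1}\phi\in V$ and the third by $A^{-1}\eta\in V$, then sum. Using $\tfrac{d}{dt}\|v^*\|_{V'}^2=2\langle v^*_t,A^{-1}v^*\rangle$ from Section~\ref{sec:notation} and the algebra relating $A$, $A^{-1}$ and $-\Delta$, this produces an identity of the form
\begin{align*}
\frac12\frac{d}{dt}\bigl(\|\phi\|_{V'}^2+\|\eta\|_{V'}^2\bigr)+\|\nabla\phi\|^2+\|\eta\|^2
=\mathcal{L}+\mathcal{I}_F+\bigl(P,A^{-1}\phi-A^{-1}\eta\bigr),
\end{align*}
where $\mathcal{L}$ is a harmless linear combination of $\|\phi\|^2$, $\|\phi\|_{V'}^2$, $\|\eta\|_{V'}^2$ arising from the identities $(\nabla m,\nabla A^{-1}\phi)=(m,\phi)-(m,A^{-1}\phi)$ and $(\nabla\eta,\nabla A^{-1}\eta)=\|\eta\|^2-\|\eta\|_{V'}^2$, and $\mathcal{I}_F$ gathers the $F'$-contributions. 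Writing $F=F_0+\lambda$ as in (F), the monotonicity of $F_0'$ gives $-(F_0'(\varphi_2)-F_0'(\varphi_1),\phi)\leq 0$, while the $\lambda$-part and the $A^{-1}\phi$-pairing are bounded by $\alpha\|\phi\|^2$ and a polynomial-in-$\|\varphi_i\|_V$ factor times $\|\phi\|\,\|\phi\|_{V'}$, thanks to H\"older and $\varphi_i\in L^\infty(0,T;L^6(\Omega))$.

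The core step is the estimate of $(P,A^{-1}\phi-A^{-1}\eta)$, which I would attack by splitting
\begin{align*}
P=p(\varphi_2)(\eta-m)+\bigl(p(\varphi_2)-p(\varphi_1)\bigr)(\psi_1-\mu_1).
\end{align*}
The first piece is handled by $\|p(\varphi_2)\|_{L^{3/2}}\|\eta-m\|_{L^6}\leq c(\|\eta\|_V+\|m\|_V)$, using $q\leq 4$ and $\varphi_2\in L^\infty(0,T;L^6(\Omega))$. For the second, (P1) yields $|p(\varphi_2)-p(\varphi_1)|\leq c(1+|\varphi_1|^{q-1}+|\varphi_2|^{q-1})|\phi|$; H\"older with the exponent triple $\bigl(\tfrac{6}{q-1},b,6\bigr)$ where $\tfrac{1}{b}=\tfrac{5-q}{6}$ (so $b\leq 6$ precisely because $q\leq 4$), combined with $\varphi_i,\phi\in L^\infty(0,T;L^6(\Omega))$, gives
\begin{align*}
\bigl\|(p(\varphi_2)-p(\varphi_1))(\psi_1-\mu_1)\bigr\|_{V'}\leq c\,h(t)\,\|\phi\|_V,
\end{align*}
with $h(t):=\bigl(1+\|\varphi_1\|_V^{q-1}+\|\varphi_2\|_V^{q-1}\bigr)\|\psi_1-\mu_1\|_V\in L^2(0,T)$.

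Using $\|A^{-1}\phi\|_V=\|\phi\|_{V'}$ and the elementary interpolation $\|\phi\|^2\leq\|\phi\|_{V'}\|\nabla\phi\|+\|\phi\|_{V'}^2$, which is a direct consequence of $\phi=AA^{-1}\phi$ and replaces Poincar\'e (unavailable since $\phi$ is not mean-free), I would absorb small multiples of $\|\nabla\phi\|^2$ and $\|\eta\|^2$ into the LHS. This reduces the identity to a Gronwall inequality
\begin{align*}
\frac{d}{dt}\bigl(\|\phi\|_{V'}^2+\|\eta\|_{V'}^2\bigr)+c\bigl(\|\nabla\phi\|^2+\|\eta\|^2\bigr)\leq K(t)\bigl(\|\phi\|_{V'}^2+\|\eta\|_{V'}^2\bigr),
\end{align*}
with $K\in L^1(0,T)$ depending on $\|\varphi_i\|_{L^\infty(V)}$, $\|\psi_i\|_{L^2(V)}$, and $\|\mu_i\|_{L^2(V)}$. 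Gronwall and time integration give the pointwise $V'$-bound and the $L^2(0,t;H)$-bound on $\eta$; combining $\|\nabla\phi\|_{L^2(H)}$ with the $V'$-bound via the same interpolation yields the $L^2(0,t;V)$-bound on $\phi$. The continuous dependence estimate follows, and uniqueness is the case of identical initial data. The hard part is the critical case $q=4$: the H\"older balance is tight, the $\|\phi\|_V$ factor in the $P$-estimate cannot be relaxed, and its $\|\phi\|$-component must be dispatched by the $\|\phi\|^2\leq\|\phi\|_{V'}\|\nabla\phi\|+\|\phi\|_{V'}^2$ interpolation rather than by a direct bound.
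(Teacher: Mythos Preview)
Your overall architecture is right, and your treatment of the second piece $(p(\varphi_2)-p(\varphi_1))(\psi_1-\mu_1)$ is fine. The gap is in your handling of the first piece $p(\varphi_2)(\eta-m)$. Your bound
\[
\|p(\varphi_2)(\eta-m)\|_{V'}\leq c\|p(\varphi_2)\|_{L^{3/2}}\|\eta-m\|_{L^6}\leq c\bigl(\|\eta\|_V+\|m\|_V\bigr)
\]
cannot be absorbed. On the left of your Gronwall identity the only dissipative terms are $\|\nabla\phi\|^2$ and $\|\eta\|^2$; you never produce $\|\eta\|_V^2$, and $\|m\|_V$ is worse still: since $m=-\Delta\phi+F'(\varphi_2)-F'(\varphi_1)$, controlling $\|m\|_V$ forces an $H^3$-type bound on $\phi$, which is nowhere on the left. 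After pairing with $A^{-1}\phi-A^{-1}\eta$ you get terms like $\|\eta\|_V\|\phi\|_{V'}$ and $\|m\|_V\|\phi\|_{V'}$ that Young's inequality cannot send to $\epsilon\|\nabla\phi\|^2+\epsilon\|\eta\|^2+K(t)(\|\phi\|_{V'}^2+\|\eta\|_{V'}^2)$.

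The paper closes this by a duality step that you are missing: instead of estimating $p(\varphi_i)(\eta-m)$ in $L^{6/5}$, it estimates $p(\varphi_i)$ as a pointwise multiplier $V\to V$, i.e.\ it bounds $\|p(\varphi_1)\chi\|_V\leq\alpha_1(t)\|\chi\|_V$. This yields
\[
\|p(\varphi_1)(\eta-m)\|_{V'}\leq\alpha_1(t)\,\|\eta-m\|_{V'},
\]
with $\|m\|_{V'}\leq\Gamma\|\phi\|_V$ (only the $V$-norm of $\phi$, not $H^3$) and $\|\eta\|_{V'}$ directly Gronwallable. Proving $\alpha_1\in L^2(0,T)$ is exactly where the optimal regularity $\varphi_i\in L^2(0,T;H^3(\Omega))$ from Theorem~\ref{existence} is used: one needs $p'(\varphi_i)\nabla\varphi_i\in L^{8/q}(0,T;L^3(\Omega))$ and $\varphi_i\in L^8(0,T;L^\infty(\Omega))$, both of which come from the embedding $L^\infty(0,T;V)\cap L^2(0,T;H^3)\hookrightarrow L^8(0,T;L^\infty)$ (cf.\ \eqref{estro}). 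Your argument uses only $\varphi_i\in L^\infty(0,T;L^6)$, which is why it falls short at $q=4$.
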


\begin{oss}
{\upshape
Notice that the restriction $1\leq q\leq 4 $ on the growth of $p$ which is needed to establish the uniqueness is exactly
the same condition which ensures the validity of the energy identity \eqref{enid} which is proven in Theorem \ref{existence}.
}
\end{oss}

\begin{proof}
Let us rewrite the chemical potential $\mu$ and \eqref{wf1}--\eqref{wf2} in the following form
\begin{align}
& \langle\varphi_t,\chi\rangle + (\nabla\mu,\nabla\chi)
+(\mu,\chi) =\big(p(\varphi)\psi-\big(p(\varphi)-1\big)\mu,\chi\big),\label{eq1bis}\\
&\mu=A\varphi+G'(\varphi),\label{eq2bis}\\
&\langle\psi_t,\xi\rangle +(\nabla\psi,\nabla\xi)
+(\psi,\xi) =-\big(\big(p(\varphi)-1\big)\psi+p(\varphi)\mu,\xi\big)\label{eq3bis},
\end{align}
for all $\chi,\xi\in V$, where $G(s):=F(s)-\frac{1}{2}s^2$.

We now write system \eqref{eq1bis}--\eqref{eq3bis}
for two weak solutions $[\varphi_i,\psi_i]$, $i=1,2$, and take the difference of each equation.
Setting $\varphi:=\varphi_2-\varphi_1$, $\psi:=\psi_2-\psi_1$ and $\mu:=\mu_2-\mu_1$, we have
\begin{align}
\nonumber
&\langle\varphi_t,\chi\rangle +(\nabla\mu,\nabla\chi) +(\mu,\chi) \\
&= \big(\big(p(\varphi_2)-p(\varphi_1)\big)(\psi_2-\mu_2)+p(\varphi_1)\psi-\big(p(\varphi_1)-1\big)\mu, \chi\big)\label{eq1diff}\\
&\mu=A\varphi+G'(\varphi_2)-G'(\varphi_1)\label{eq2diff}\\
\nonumber
&\langle\psi_t,\xi\rangle +(\nabla\psi,\nabla\xi) +(\psi,\xi)
\\
&=-\big(\big(p(\varphi_2)-p(\varphi_1)\big)(\psi_2-\mu_2)-\big(p(\varphi_1)-1\big)\psi+p(\varphi_1)\mu,
\xi\big),\label{eq3diff}
\end{align}
for all $\chi,\xi \in V$.
Let us take $ \chi=A^{-1} \varphi$ in \eqref{eq1diff} and $\xi= A^{-1} \psi$ in \eqref{eq3diff} and sum the
resulting identities. Taking also \eqref{eq2diff} into account, we get
\begin{align}
&\frac{1}{2}\frac{d}{dt}\Vert \varphi\Vert_{V'}^2+\Vert \varphi\Vert_V^2+\big(G'(\varphi_2)-G'(\varphi_1),\varphi\big)
+\frac{1}{2}\frac{d}{dt}\Vert\psi\Vert_{V'}^2+\Vert\psi\Vert^2\nonumber\\
&=\Big(\big(p(\varphi_2)-p(\varphi_1)\big)(\psi_2-\mu_2)+p(\varphi_1)\psi-\big(p(\varphi_1)-1\big)\mu,A^{-1}\varphi\Big)\nonumber\\
&+\Big(-\big(p(\varphi_2)-p(\varphi_1)\big)(\psi_2-\mu_2)-\big(p(\varphi_1)-1\big)\psi+p(\varphi_1)\mu,A^{-1}\psi\Big).
\label{diffid1}
\end{align}
We now need to estimate the terms on the right hand side. Observe first that
\begin{align}
&\big(p(\varphi_1)\psi-\big(p(\varphi_1)-1\big)\mu,A^{-1}\varphi\big)
\leq\big(\Vert p(\varphi_1)(\psi-\mu)\Vert_{V'}+\Vert\mu\Vert_{V'}\big)\Vert\varphi\Vert_{V'}.\label{est1}
\end{align}
We have to estimate in $V'$ the term $p(\varphi_1)(\psi-\mu)$.
Let us first estimate $p(\varphi_1)\chi$ in $V$.
By using assumption (P1) we get
\begin{align}
&\Vert p(\varphi_1)\nabla\chi\Vert\leq c\big(1+\Vert\varphi_1\Vert_{L^\infty(\Omega)}^q\big)\Vert\nabla\chi\Vert.
\label{rt1}
\end{align}
Moreover, we have
\begin{align}
&\Vert p'(\varphi_1)\nabla\varphi_1\chi\Vert\leq\Vert p'(\varphi_1)\nabla\varphi_1\Vert_{L^3(\Omega)}\Vert\chi\Vert_{L^6(\Omega)}
\leq \Vert p'(\varphi_1)\nabla\varphi_1\Vert_{L^3(\Omega)}\Vert\chi\Vert_V.\label{rt2}
\end{align}
However, $\nabla\varphi_1\in L^\infty(0,T;H)\cap L^2(0,T;H^2(\Omega))\hookrightarrow L^8(0,T;L^3(\Omega))$.
On the other hand, $\varphi_1\in L^\infty(0,T;V)\cap L^2(0,T;H^3(\Omega))\hookrightarrow L^8(0,T;L^\infty(\Omega))$
(cf. \eqref{estro} with $\theta=\infty$).
Thus, thanks to assumption (P1), we also have $p'(\varphi_1)\in L^{8/(q-1)}(0,T;L^\infty(\Omega))$.
Hence, we find
\begin{align}
& p'(\varphi_1)\nabla\varphi_1\in L^{8/q}(0,T;L^3(\Omega)).\label{rt3}
\end{align}
Moreover, observe that
\begin{align}
&\Vert p(\varphi_1)\chi\Vert\leq c\Vert p(\varphi_1)\Vert_{L^3(\Omega)}\Vert\chi\Vert_V,\label{rt4}
\end{align}
and
\begin{align}
&\Vert p(\varphi_1)\Vert_{L^3(\Omega)}\leq c\big(1+\Vert\varphi_1\Vert_{L^{3q}(\Omega)}^q\big).\label{rt5}
\end{align}
Observing that $\varphi_1\in L^\infty(0,T;V)\cap L^2(0,T;H^3(\Omega))\hookrightarrow L^{8q/(q-2)}(0,T;L^{3q}(\Omega))$
(cf. \eqref{estro}), then we have
\begin{align}
&p(\varphi_1)\in L^{8/(q-2)}(0,T;L^3(\Omega)).\label{rt6}
\end{align}
By collecting \eqref{rt1}--\eqref{rt6} we get
\begin{align*}
\Vert p(\varphi_1)\chi\Vert_V\leq\alpha_1(t)\Vert\chi\Vert_V,
\end{align*}
where the function $\alpha_1$ is given by
\begin{align*}
&\alpha_1(t):= c\big(\Vert p\big(\varphi_1(t)\big)\Vert_{L^3(\Omega)}+\Vert\varphi_1(t)\Vert_{L^\infty(\Omega)}^q
+\Vert p'\big(\varphi_1(t)\big)\nabla\varphi_1(t)\Vert_{L^3(\Omega)}+1\big),
\end{align*}
and, since $q\leq 4$, we have $\alpha_1\in L^2(0,T)$.
%To this aim we first notice that, due to (P) and to the fact that
%$\varphi_1\in H^2$, then, for every $\chi\in V$
%we have also $p(\varphi_1)\chi\in V$ and $\Vert p(\varphi_1)\chi\Vert_V\leq c(1+\Vert\varphi_1\Vert_{H^2})\Vert\chi\Vert_V$.
Therefore, we obtain
\begin{align}
&|\langle p(\varphi_1)(\psi-\mu),\chi\rangle|\leq|\big((\psi-\mu),p(\varphi_1)\chi\big)|\leq
\alpha_1(t)\Vert\psi-\mu\Vert_{V'}\Vert\chi\Vert_{V},\nonumber
\end{align}
which yields
\begin{align}
&\Vert p(\varphi_1)(\psi-\mu)\Vert_{V'}\leq \alpha_1(t)\Vert\psi-\mu\Vert_{V'}.\label{est2}
\end{align}
By combining \eqref{est1} with \eqref{est2} we deduce
\begin{align}
&\big(p(\varphi_1)\psi-\big(p(\varphi_1)-1\big)\mu,A^{-1}\varphi\big)
\leq \alpha_1(t)\big(\Vert\psi\Vert_{V'}+\Vert\mu\Vert_{V'}\big)\Vert\varphi\Vert_{V'}.
\label{est3}
\end{align}
For the estimate of $\mu$ in $V'$, by means of assumption (F) and using the continuous embedding
$L^{6/5}(\Omega)\hookrightarrow V'$, it is easy to see that
\begin{align}
\Vert\mu\Vert_{V'}
&\leq\Vert\varphi\Vert_V+\Vert G'(\varphi_2)-G'(\varphi_1)\Vert_{V'}\nonumber\\
&\leq\Vert\varphi\Vert_V+ c\big(1+
\Vert\varphi_1\Vert_{L^{3(\rho-2)/2}(\Omega)}^{\rho-2}
+\Vert\varphi_2\Vert_{L^{3(\rho-2)/2}(\Omega)}^{\rho-2}\big)\Vert\varphi\Vert_{L^6(\Omega)}\nonumber\\
&\leq c\big(1+\Vert\varphi_1\Vert_V^{\rho-2}+\Vert\varphi_2\Vert_V^{\rho-2}\big)\Vert\varphi\Vert_V
\leq \Gamma\Vert\varphi\Vert_V,\label{est4}
\end{align}
since $3(\rho-2)/2\leq 6$, {\color{black}being $\rho< 6$}.
In the last inequality we have used $\eqref{basicreg1}_1$.
In \eqref{est4} and also in the estimates below, $\Gamma$ denotes
%Henceforth in this proof $\Gamma$ will denote
a positive constant that depends on the norms of the initial
data of the two solutions, i.e., $\Gamma=\Gamma\big(\Vert\varphi_{01}\Vert_V,\Vert\varphi_{02}\Vert_V,\Vert\psi_{01}\Vert,\Vert\psi_{02}\Vert\big)$
(of course, $\Gamma$ depends also on $F$ and $\Omega$).
{\color{black} The value of $\Gamma$ may change even within the same line}.
From \eqref{est3} and \eqref{est4} we get
\begin{align}
|\big(p(\varphi_1)\psi-\big(p(\varphi_1)-1\big)\mu,A^{-1}\varphi\big)|
&\leq \alpha_1(t)\Gamma\big(\Vert\psi\Vert_{V'}+\Vert\varphi\Vert_{V}\big)\Vert\varphi\Vert_{V'}\nonumber\\
&\leq\frac{1}{10}\Vert\varphi\Vert_V^2
+\Gamma\alpha_1^2(t)\big(\Vert\psi\Vert_{V'}^2+\Vert\varphi\Vert_{V'}^2\big).\label{est6}
\end{align}
The next term on the right hand side of \eqref{diffid1} to be estimated is the following
\begin{align}
&|\big(\big(p(\varphi_2)-p(\varphi_1)\big)(\psi_2-\mu_2),A^{-1}\varphi\big)|\leq
\Vert(p(\varphi_2)-p(\varphi_1)\big)(\psi_2-\mu_2)\Vert_{V'}\Vert\varphi\Vert_{V'}.\label{est5}
\end{align}
Let us first control the term $(\big(p(\varphi_2)-p(\varphi_1)\big)(\psi_2-\mu_2)$ in $V'$. We have, for every $\chi\in V$,
\begin{align}
\big|\big\langle\big(p(\varphi_1)-p(\varphi_2)\big)(\psi_2-\mu_2),\chi\big\rangle\big)\big|
&\leq\Vert p(\varphi_1)-p(\varphi_2)\Vert\Vert\psi_2-\mu_2\Vert_{L^3}\Vert\chi\Vert_{L^6}\nonumber\\
&\leq c\Vert p(\varphi_1)-p(\varphi_2)\Vert\Vert\psi_2-\mu_2\Vert_{L^3}\Vert\chi\Vert_{V}.\label{est22}
\end{align}
On the other hand, thanks to (P1), we obtain
\begin{align}
\Vert p(\varphi_2)-p(\varphi_1)\Vert
&\leq c\big(1+\Vert\varphi_1\Vert_{L^\infty(\Omega)}^{q-1}
+\Vert\varphi_2\Vert_{L^\infty(\Omega)}^{q-1}\big)\Vert\varphi\Vert\nonumber\\
&\leq c\big(1+\Vert\varphi_1\Vert_{L^\infty(\Omega)}^{q-1}
+\Vert\varphi_2\Vert_{L^\infty(\Omega)}^{q-1}\big)\Vert\varphi\Vert_{V'}^{1/2}\Vert\varphi\Vert_V^{1/2}.\label{est23}
\end{align}
Moreover, by using \eqref{GN}
{\color{black} and the interpolation inequality
$\Vert\mu_2\Vert\leq \Vert\mu_2\Vert_{V'}^{1/2}\Vert\mu_2\Vert_V^{1/2}$,} we get
\begin{align}
&\Vert\mu_2\Vert_{L^3(\Omega)}\leq c\Vert\mu_2\Vert^{1/2}\Vert\mu_2\Vert_V^{1/2}\leq c\Vert\mu_2\Vert_{V'}^{1/4}\Vert\mu_2\Vert_V^{3/4}
{\color{black}\leq\Gamma
%\Vert\varphi_2\Vert_V^{1/4}
\Vert\mu_2\Vert_V^{3/4}},\label{est25}
\end{align}
where {\color{black} in the last estimate we have exploited the inequality
$\Vert\mu_2\Vert_{V'}\leq \Gamma(1+\Vert\varphi_2\Vert_V)\leq\Gamma$ which} can be
deduced by arguing as in \eqref{est4}.
% (the constant $\gamma$ depends on the initial data).
 Hence, from \eqref{est22}--\eqref{est25} we infer
 \begin{align}
 &\Vert(p(\varphi_2)-p(\varphi_1)\big)(\psi_2-\mu_2)\Vert_{V'}
 \leq \alpha_2(t)
\Vert\varphi\Vert_{V'}^{1/2}\Vert\varphi\Vert_V^{1/2},
 %c\Vert p(\varphi_2)-p(\varphi_1)\Vert\Vert\psi_2-\mu_2\Vert_{V}
 %\leq c\big(\Vert\psi_2\Vert_V+\Vert\mu_2\Vert_V\big)\Vert\varphi\Vert
 \label{est45}
 \end{align}
 where
 \begin{align}
 &\alpha_2(t):=c\big(1+\Vert\varphi_1(t)\Vert_{L^\infty(\Omega)}^{q-1}
+\Vert\varphi_2(t)\Vert_{L^\infty(\Omega)}^{q-1}\big)\big(\Vert\psi_2(t)\Vert_{L^3(\Omega)}
{\color{black}+\Gamma
%\Vert\varphi_2(t)\Vert_V^{1/4}
\Vert\mu_2(t)\Vert_V^{3/4}}\big).
\label{defalpha2}
 \end{align}
 Observe that $\alpha_2\in L^{4/3}(0,T)$ since $q\leq 4$.
 Indeed, both factors in \eqref{defalpha2} are in $L^{8/3}(0,T)$, recalling that $\varphi_1,\varphi_2\in
 L^\infty(0,T;V)\cap L^2(0,T;H^3(\Omega))\hookrightarrow L^8(0,T;L^\infty(\Omega))$ and properties
 \eqref{basicreg1}--\eqref{basicreg3} {\color{black}(in particular we have $\psi_2\in L^{10/3}(Q)$).}
Hence, from \eqref{est5} we get
\begin{align}
|\big(\big(p(\varphi_2)-p(\varphi_1)\big)(\psi_2-\mu_2),A^{-1}\varphi\big)|
&\leq
\alpha_2(t)\Vert\varphi\Vert_V^{1/2}\Vert\varphi\Vert_{V'}^{3/2}\nonumber\\
&\leq\frac{1}{10}\Vert\varphi\Vert_V^2+\alpha_2^{4/3}(t)\Vert\varphi\Vert_{V'}^2.
\label{est7}
%&\leq\frac{1}{10}\Vert\varphi\Vert_V^2+C\big(1+\Vert\psi_2\Vert_V^2+\Vert\mu_2\Vert_V^2\big)\Vert\varphi\Vert_{V'}^2.
\end{align}
We now estimate the following term (cf. the right hand side of \eqref{diffid1})
\begin{align}
\big|\big(-\big(p(\varphi_2)-p(\varphi_1)\big)(\psi_2-\mu_2),A^{-1}\psi\big)\big|
&{\color{black}\leq
\Vert(p(\varphi_2)-p(\varphi_1)\big)(\psi_2-\mu_2)\Vert_{V'}\Vert A^{-1}\psi\Vert_{V}}\nonumber\\
&{\color{black}=}
\Vert(p(\varphi_2)-p(\varphi_1)\big)(\psi_2-\mu_2)\Vert_{V'}\Vert\psi\Vert_{V'}\nonumber\\
&\leq
\alpha_2(t)
\Vert\varphi\Vert_{V'}^{1/2}\Vert\varphi\Vert_V^{1/2}\Vert\psi\Vert_{V'}\nonumber\\
&
\leq\frac{1}{10}\Vert\varphi\Vert_V^2+\alpha_2^{4/3}(t)\Vert\varphi\Vert_{V'}^{2/3}\Vert\psi\Vert_{V'}^{4/3}\nonumber\\
&\leq\frac{1}{10}\Vert\varphi\Vert_V^2+\alpha_2^{4/3}(t)\big(\Vert\varphi\Vert_{V'}^2+\Vert\psi\Vert_{V'}^2\big),
\label{est9}
\end{align}
{\color{black} where, in the third inequality, \eqref{est45} has been used.}
We now estimate the last term on the right hand side of \eqref{diffid1}
\begin{align}
\big|\big(-\big(p(\varphi_1)-1\big)\psi+p(\varphi_1)\mu,A^{-1}\psi\big)\big|
&\leq
\big(\Vert p(\varphi_1)(\psi-\mu)\Vert_{V'}+\Vert\psi\Vert_{V'}\big)\Vert\psi\Vert_{V'}\nonumber\\
&\leq
\big(\alpha_1\Vert\psi-\mu\Vert_{V'}
+\Vert\psi\Vert_{V'}\big)\Vert\psi\Vert_{V'}\nonumber\\
&\leq {\color{black}(1+\alpha_1)}\Vert\psi\Vert_{V'}^2+\alpha_1\Gamma\Vert\varphi\Vert_V\Vert\psi\Vert_{V'}\nonumber\\
&\leq \frac{1}{10}\Vert\varphi\Vert_V^2+\Gamma{\color{black}(1+\alpha_1^2)}\Vert\psi\Vert_{V'}^2,\label{est8}
\end{align}
{\color{black} where we have used \eqref{est2} in the second inequality and \eqref{est4} in the third inequality.}

Moreover, setting {\color{black}$\hat{\beta}:=\alpha+1-c_1$}, we have
\begin{align}
&\big(G'(\varphi_2)-G'(\varphi_1),\varphi\big)\geq-\hat{\beta}\Vert\varphi\Vert^2\geq-\frac{1}{10}\Vert\varphi\Vert_V^2
-c\Vert\varphi\Vert_{V'}^2.\label{est10}
\end{align}

Finally, plugging estimates \eqref{est6} and \eqref{est7}--\eqref{est10} into \eqref{diffid1}
yields the following differential inequality
\begin{align}
&\frac{d}{dt}\Big(\Vert\varphi\Vert_{V'}^2+\Vert\psi\Vert_{V'}^2\Big)+\Vert\varphi\Vert_V^2+\Vert\psi\Vert^2
%\leq\Gamma\big(1+\Vert\varphi_2\Vert_{H^2}^2\big)\big(\Vert\psi\Vert_{V'}^2+\Vert\varphi\Vert_{V'}^2\big)\nonumber\\
%&+c\big(\Vert\psi_2\Vert_V^2+\Vert\mu_2\Vert_V^2\big)\big(\Vert\varphi\Vert_{V'}^2+\Vert\psi\Vert_{V'}^2\big)
%+\Gamma(1+\Vert\varphi_2\Vert_{H^2}^2)\Vert\psi\Vert_{V'}^2+c\Vert\varphi\Vert_{V'}^2\nonumber\\
\leq\hat{\gamma}\Big(\Vert\varphi\Vert_{V'}^2+\Vert\psi\Vert_{V'}^2\Big),\label{diffineq1}
\end{align}
where
\begin{align*}
&\hat{\gamma}:=\Gamma\big(\alpha_1^2+\alpha_2^{4/3}+1\big)\in L^1(0,T).
%\big(1+\Vert\psi_2\Vert_V^2+\Vert\mu_2\Vert_V^2+\Vert\varphi_1\Vert_{H^2}^2\big),
\end{align*}
%and we have $\gamma\in L^1(0,T)$.
%, thanks to \eqref{basicreg1}--\eqref{basicreg3}.
An application of Gronwall's inequality to \eqref{diffineq1} ends the proof.
\end{proof}

\section{Strong solutions and the global attractor}
\label{sec:strong}

Here we establish a regularity result for Problem \eqref{eq1}--\eqref{ics} that holds under the same
condition on $p$ which ensures uniqueness (cf. (P1)). This result will be used to deduce
some uniform in time higher-order estimates which will be crucial in order to prove
the existence of the global attractor.

\begin{thm}
Suppose (F) and (P1) hold. Let $\varphi_0\in H^3(\Omega)$ and $\psi_0\in V$.
Then, for every $T>0$, the solution $[\varphi,\psi]$ to Problem \eqref{eq1}--\eqref{ics} on $[0,T]$ given
by Theorem \ref{existence} satisfies
\begin{align*}
&\varphi\in L^\infty(0,T;H^3(\Omega)),\quad\varphi_t\in L^2(0,T;V),\\
&\mu\in L^\infty(0,T;V),\\
&\psi\in L^\infty(0,T;V),\quad\psi_t\in L^2(0,T;H).
\end{align*}

\end{thm}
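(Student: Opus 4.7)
My plan is to derive higher-order a priori estimates on the Faedo-Galerkin approximation used in the proof of Theorem~\ref{existence} and pass to the limit. The strategy splits into a parabolic smoothing argument for $\psi$ and a time-differentiated energy estimate for the Cahn-Hilliard-type subsystem in $(\varphi,\mu)$. The strengthened initial data $\varphi_0\in H^3(\Omega)$ and $\psi_0\in V$ ensure that $\mu_0=-\Delta\varphi_0+F'(\varphi_0)\in V$ (via (F) and the three-dimensional Sobolev embedding $H^2\hookrightarrow L^\infty$) and that $\|\nabla\psi_0\|$ is finite, so that the new energy functionals are under control at $t=0$.

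For the first claim, I would test the Galerkin $\psi_n$-equation with $-\Delta\psi_n$ to obtain
\begin{equation*}
\frac{1}{2}\frac{d}{dt}\|\nabla\psi_n\|^2+\|\Delta\psi_n\|^2=\bigl(p(\varphi_n)(\psi_n-\mu_n),\Delta\psi_n\bigr).
\end{equation*}
After absorbing $\tfrac12\|\Delta\psi_n\|^2$ by Young's inequality, the task reduces to bounding $\|p(\varphi_n)(\psi_n-\mu_n)\|_H$ in $L^2(0,T)$ uniformly in $n$. This is made possible by the regularity already available from Theorem~\ref{existence}: the embedding $L^\infty(0,T;V)\cap L^2(0,T;H^3(\Omega))\hookrightarrow L^8(0,T;L^\infty(\Omega))$ (cf.~\eqref{estro} with $\theta=\infty$) together with (P1) and $q\leq 4$ gives $\|p(\varphi_n)\|_{L^\infty(\Omega)}\in L^2(0,T)$, which combined with the control $\sqrt{p(\varphi_n)}(\mu_n-\psi_n)\in L^2(Q)$ from \eqref{basicreg4} and a Hölder split yields the required bound. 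Gronwall's lemma then produces $\psi\in L^\infty(0,T;V)$, and comparison in \eqref{eq3} delivers $\psi_t\in L^2(0,T;H)$ as well as $\psi\in L^2(0,T;H^2(\Omega))$.

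The harder step is the estimate for $\mu$ and $\varphi_t$. I test the Galerkin version \eqref{pbapp1} with $\mu_n'$ and exploit the time-differentiated \eqref{pbapp2}, $(\mu_n',w_j)=(\nabla\varphi_n',\nabla w_j)+(F''(\varphi_n)\varphi_n',w_j)$, to obtain the formal identity
\begin{equation*}
\frac{1}{2}\frac{d}{dt}\|\nabla\mu_n\|^2+\|\nabla\varphi_n'\|^2+\int_\Omega F''(\varphi_n)(\varphi_n')^2=\bigl(p(\varphi_n)(\psi_n-\mu_n),\mu_n'\bigr).
\end{equation*}
By (F) the third term on the left is bounded below by $-\alpha\|\varphi_n'\|^2$. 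The crucial trick, to avoid requiring $F\in C^3$ (which the hypotheses do not provide), is to rewrite the right-hand side by a spatial integration by parts based on $\mu_n'=-\Delta\varphi_n'+P_n[F''(\varphi_n)\varphi_n']$ and $\partial_{\mathbf n}\varphi_n'=0$:
\begin{equation*}
\bigl(p(\varphi_n)(\psi_n-\mu_n),\mu_n'\bigr)=\bigl(\nabla[p(\varphi_n)(\psi_n-\mu_n)],\nabla\varphi_n'\bigr)+\bigl(p(\varphi_n)F''(\varphi_n)(\psi_n-\mu_n),\varphi_n'\bigr).
\end{equation*}
Each piece is bounded by Hölder together with \eqref{GN}, using the improved regularity $\psi\in L^\infty(0,T;V)\cap L^2(0,T;H^2)$ from the previous step, the $L^\infty$-control of $\varphi$ inherited from \eqref{estro}, and the growth bound $|p'|\leq c(1+|s|^{q-1})$ with $q\leq 4$ from (P1), so that $\tfrac12\|\nabla\varphi_n'\|^2$ can be absorbed on the left. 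The residual $\|\varphi_n'\|^2$ is handled by Poincaré together with a separate bound on the average $\overline{\varphi_n'}$ read off directly from \eqref{pbapp1}. Gronwall (starting from $\|\nabla\mu_0\|^2<\infty$) then produces uniform bounds on $\varphi_n'\in L^2(0,T;V)$ and $\nabla\mu_n\in L^\infty(0,T;H)$; bounding $|\overline{\mu_n}|$ via $|(F'(\varphi_n),1)|$ as in \eqref{avcontr} lifts this to $\mu\in L^\infty(0,T;V)$. Finally, the elliptic equation $-\Delta\varphi=\mu-F'(\varphi)$ with the Neumann boundary condition gives, by classical elliptic regularity, first $\varphi\in L^\infty(0,T;H^2)\hookrightarrow L^\infty(0,T;L^\infty)$ and hence $F'(\varphi)\in L^\infty(0,T;V)$ through (F), and then $\varphi\in L^\infty(0,T;H^3(\Omega))$ by a second application of elliptic regularity.

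The main obstacle is the third paragraph, more precisely the estimation of $\nabla[p(\varphi_n)(\psi_n-\mu_n)]$ in $L^2(Q)$. Expanding the gradient produces terms $p'(\varphi)\nabla\varphi(\psi-\mu)$, $p(\varphi)\nabla\psi$ and $p(\varphi)\nabla\mu$, each of which requires a delicate balance between the Gagliardo-Nirenberg exponents available from \eqref{estro} and \eqref{GN}, the new $H^2$-regularity of $\psi$, and the restriction $q\leq 4$ from (P1)---the very same threshold that underlies both the energy identity in Theorem~\ref{existence} and the uniqueness proof of Theorem~\ref{uniqueness}.
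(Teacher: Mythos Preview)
Your scheme has a real gap in the first step. From $\|p(\varphi_n)\|_{L^\infty(\Omega)}\in L^2(0,T)$ and $\sqrt{p(\varphi_n)}(\mu_n-\psi_n)\in L^2(Q)$ you only get
\[
\|p(\varphi_n)(\psi_n-\mu_n)\|_H\le \|p(\varphi_n)\|_{L^\infty(\Omega)}^{1/2}\,\|\sqrt{p(\varphi_n)}(\psi_n-\mu_n)\|_H\in L^{4/3}(0,T),
\]
hence $\|p(\varphi_n)(\psi_n-\mu_n)\|_H^2\in L^{2/3}(0,T)$, not $L^1(0,T)$. Any other H\"older split has the same defect for $q>2$: the obstruction is the term $p(\varphi)\mu$, because at this stage $\mu$ is only in $L^2(0,T;V)$ while $p(\varphi)$ carries no $L^\infty$-in-time bound. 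So the $\psi$–estimate cannot be decoupled and closed \emph{before} you improve $\mu$; your ``previous step'' is therefore unavailable when you invoke it in the $\mu$–estimate.

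The paper avoids this precisely by \emph{coupling} the two estimates: it tests \eqref{eq1} by $\mu_t$ and \eqref{eq3} by $\psi_t$ and sums. The reaction contributions then combine algebraically,
\[
(p(\varphi)(\psi-\mu),\mu_t)-(p(\varphi)(\psi-\mu),\psi_t)
= -\tfrac12\,\frac{d}{dt}\!\int_\Omega p(\varphi)(\mu-\psi)^2+\tfrac12\!\int_\Omega p'(\varphi)\varphi_t(\mu-\psi)^2,
\]
so the Lyapunov functional is $\|\nabla\mu\|^2+\|\nabla\psi\|^2+\int p(\varphi)(\mu-\psi)^2$ and the \emph{only} right-hand side is $\tfrac12\int p'(\varphi)\varphi_t(\mu-\psi)^2$; no spatial integration by parts, no $\nabla p(\varphi)$, and no $F''$ appear there. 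That single term is estimated via $\|p'(\varphi)\|\,\|\varphi_t\|_{L^6}\|\mu-\psi\|_{L^6}^2$, and the missing $\|\varphi_t\|$-control is supplied by an auxiliary test of \eqref{eq1} with $\varphi_t$, which adds $\tfrac{d}{dt}\|\Delta\varphi\|^2$ to the Lyapunov quantity. Your spatial integration-by-parts rewriting of $(p(\varphi)(\psi-\mu),\mu_t)$ is not wrong at the formal level, but it generates the mixed term $(p(\varphi)F''(\varphi)(\psi-\mu),\varphi_t)$ and the gradient term $\nabla[p(\varphi)(\psi-\mu)]$, and---more to the point---it still has to be run \emph{simultaneously} with the $\psi$-inequality to close, which brings you back to the paper's coupled Gronwall anyway.
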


\begin{proof}
The proof is carried out by deducing formally some higher order identities and estimates
which can be justified rigorously by means of a suitable approximation
procedure (see the proof of Theorem \ref{existence}).

Testing \eqref{eq1} by $\mu_t$ in $H$ and using \eqref{eq2}, we find
\begin{align}
&\frac{1}{2}\frac{d}{dt}\Vert\nabla\mu\Vert^2+\Vert\nabla\varphi_t\Vert^2+\int_\Omega F''(\varphi)\varphi_t^2
=\big(p(\varphi)(\psi-\mu),\mu_t\big),\nonumber
\end{align}
whence
\begin{align}
&\frac{1}{2}\frac{d}{dt}\Vert\nabla\mu\Vert^2+\Vert\nabla\varphi_t\Vert^2+\int_\Omega F''(\varphi)\varphi_t^2
+\frac{1}{2}\frac{d}{dt}\int_\Omega p(\varphi)\mu^2=\frac{1}{2}\int_\Omega p'(\varphi)\varphi_t \mu^2+\big(p(\varphi)\psi,\mu_t\big).
\label{diffid2}
\end{align}
Test now \eqref{eq3} by $\psi_t$ in $H$ to get
\begin{align}
&\Vert\psi_t\Vert^2=-\frac{1}{2}\frac{d}{dt}\Vert\nabla\psi\Vert^2-\frac{1}{2}\frac{d}{dt}\int_\Omega p(\varphi)\psi^2
+\frac{1}{2}\int_\Omega p'(\varphi)\varphi_t\psi^2+\big(p(\varphi)\mu,\psi_t\big).
\label{diffid3}
\end{align}
Summing \eqref{diffid2} with \eqref{diffid3} we obtain
\begin{align}
&\frac{1}{2}\frac{d}{dt}\Vert\nabla\mu\Vert^2+\Vert\nabla\varphi_t\Vert^2+\int_\Omega F''(\varphi)\varphi_t^2
+\frac{1}{2}\frac{d}{dt}\int_\Omega p(\varphi)\mu^2\nonumber\\
&+\Vert\psi_t\Vert^2+\frac{1}{2}\frac{d}{dt}\Vert\nabla\psi\Vert^2
+\frac{1}{2}\frac{d}{dt}\int_\Omega p(\varphi)\psi^2\nonumber\\
&=\frac{1}{2}\int_\Omega p'(\varphi)\varphi_t \mu^2+\frac{d}{dt}\int_\Omega p(\varphi)\psi\mu-\int_\Omega p'(\varphi)\varphi_t\psi\mu
+\frac{1}{2}\int_\Omega p'(\varphi)\varphi_t\psi^2,\nonumber
\end{align}
so that
\begin{align}
&\frac{1}{2}\frac{d}{dt}\Big(\Vert\nabla\mu\Vert^2+\Vert\nabla\psi\Vert^2+\int_\Omega p(\varphi)(\mu-\psi)^2\Big)
+\Vert\nabla\varphi_t\Vert^2+\int_\Omega F''(\varphi)\varphi_t^2+\Vert\psi_t\Vert^2\nonumber\\
&=\frac{1}{2}\int_\Omega p'(\varphi)\varphi_t(\mu-\psi)^2.\label{diffid4}
\end{align}
Observe now that
\begin{align}
&\Big|\frac{1}{2}\int_\Omega p'(\varphi)\varphi_t(\mu-\psi)^2\Big|
\leq\frac{1}{2}\Vert p'(\varphi)\Vert\Vert\varphi_t\Vert_{L^6(\Omega)}\Vert\mu-\psi\Vert_{L^6(\Omega)}^2
\leq c\Vert p'(\varphi)\Vert\Vert\varphi_t\Vert_V\Vert\mu-\psi\Vert_V^2.
% p_\infty'\int_\Omega|\varphi_t|(\mu^2+\psi^2)
%\leq c\Vert\varphi_t\Vert_V\big(\Vert\mu\Vert_V\Vert\mu\Vert+\Vert\psi\Vert_V\Vert\psi\Vert\big),
\label{est12}
\end{align}
%where $p_\infty':=\Vert p'\Vert_{L^\infty(\mathbb{R})}$.
Moreover, we have (using \eqref{eq1} and \eqref{bcs})
\begin{align}
\Vert\varphi_t\Vert_V
&\leq (1+c_{\Omega})\Vert\nabla\varphi_t\Vert+|\Omega|^{1/2}|\overline{\varphi}_t|\nonumber\\
&\leq(1+c_{\Omega})\Vert\nabla\varphi_t\Vert+\frac{1}{|\Omega|^{1/2}}\Big|\int_\Omega p(\varphi)(\mu-\psi)\Big|\nonumber\\
&\leq(1+c_{\Omega})\Vert\nabla\varphi_t\Vert+\frac{1}{|\Omega|^{1/2}}\Vert p(\varphi)\Vert_{L^{6/5}(\Omega)}\Vert\mu-\psi\Vert_{L^6(\Omega)},
\label{est13}
\end{align}
%where $p_\infty:=\Vert p\Vert_{L^\infty(\mathbb{R})}$ and
where $c_\Omega$ is the constant appearing in the Poincar\'{e}-Wirtinger inequality.
Hence, by combining \eqref{est12} with \eqref{est13}, we get, \color{black} appying, in particular, the Young inequality with exponents $4$ and $4/3$ in the last line, \color{black}
\begin{align}
&\Big|\frac{1}{2}\int_\Omega p'(\varphi)\varphi_t(\mu-\psi)^2\Big|
\leq c\Vert p'(\varphi)\Vert\big(\Vert\nabla\varphi_t\Vert+\Vert p(\varphi)\Vert_{L^{6/5}(\Omega)}\Vert\mu-\psi\Vert_V\big)
\big(\Vert\mu\Vert_V^2+\Vert\psi\Vert_V^2\big)\nonumber\\
&{\color{black}
\leq \frac{1}{2}\Vert\nabla\varphi_t\Vert^2+c\Vert p'(\varphi)\Vert^2\big(\Vert\mu\Vert_V^4+\Vert\psi\Vert_V^4\big)
+c\Vert p'(\varphi)\Vert\Vert p(\varphi)\Vert_{L^{6/5}(\Omega)}\big(\Vert\mu\Vert_V^3+\Vert\psi\Vert_V^3\big)
}
\nonumber\\
&\leq \frac{1}{2}\Vert\nabla\varphi_t\Vert^2+c\big(1+\Vert p'(\varphi)\Vert^2\big)\big(\Vert\mu\Vert_V^4+\Vert\psi\Vert_V^4\big)
+c\Vert p'(\varphi)\Vert^4\Vert p(\varphi)\Vert_{L^{6/5}(\Omega)}^4.
\label{est28}
\end{align}
Thanks to (P1) and to $\eqref{basicreg1}_1$ we can see that $p'(\varphi)$
is controlled in $L^\infty(0,T;H)$.
Moreover, we know that $\varphi$ is bounded in $L^{18}(0,T;L^{54/5}(\Omega))$ (cf. \eqref{est39})
and $\varphi$ is also bounded in $L^{4q}(0,T;L^{6q/5}(\Omega))$ since $q\leq 4$,
Thanks to this bound, assumption (P1) entails that $p(\varphi)$ is controlled in $L^4(0,T; L^{6/5}(\Omega))$.
Thus we have
\begin{align}
&\Vert p'(\varphi)\Vert_{L^\infty(0,T;H)}\leq \Gamma,\quad \Vert p(\varphi)\Vert_{L^4(0,T; L^{6/5}(\Omega))}\leq \Gamma,
\label{est29}
\end{align}
%Furthermore, from \eqref{est11} we have%
%\begin{align}
%&\Vert\mu\Vert_V\leq(1+c_\Omega)\Vert\nabla\mu\Vert+|\Omega|^{1/2}|\overline{\mu}|\leq\Vert\mu\Vert_V\leq(1+c_\Omega)\Vert\nabla\mu\Vert+\Gamma,
%\label{est14}
%\end{align}
where henceforth $\Gamma=\Gamma\big(\Vert\varphi_0\Vert_V,\Vert\psi_0\Vert\big)$ will denote
a positive constant that depends on the norms of the initial data (and on $F$, $p$, $\Omega$).
%Hence, from \eqref{est12}--\eqref{est14} and using also the first of \eqref{basicreg3}
%the term on the right hand side of \eqref{diffid4}
%can be estimated as
%\begin{align}
%&\Big|\frac{1}{2}\int_\Omega p'(\varphi)\varphi_t(\mu-\psi)^2\Big|
%\leq\Gamma\big(\Vert\nabla\varphi_t\Vert\Vert\nabla\mu\Vert\Vert\mu\Vert+\Vert\nabla\varphi_t\Vert\Vert\mu\Vert+
%\Vert\psi-\mu\Vert\Vert\nabla\mu\Vert\Vert\mu\Vert\nonumber\\
%&+\Vert\psi-\mu\Vert\Vert\mu\Vert+\Vert\nabla\varphi_t\Vert\Vert\psi\Vert_V
%+\Vert\psi-\mu\Vert\Vert\psi\Vert_V\big)\nonumber\\
%&\leq\frac{1}{2}\Vert\nabla\varphi_t\Vert^2+\Gamma\Vert\mu\Vert^2\Vert\nabla\mu\Vert^2+\Gamma\big(\Vert\mu\Vert^2+\Vert\psi\Vert_V^2+1\big).
%\nonumber
%\end{align}
Furthermore, we have
\begin{align}
&\Vert\mu\Vert_V\leq(1+c_\Omega)\Vert\nabla\mu\Vert+|\Omega|^{1/2}|\overline{\mu}|\leq(1+c_\Omega)\Vert\nabla\mu\Vert+\Gamma,
\label{est30}\\
&\Vert\psi\Vert_V\leq \Vert\nabla\psi\Vert+\Gamma\label{est31}.
\end{align}

Plugging estimate \eqref{est28}  into \eqref{diffid4} and using \eqref{est29}, \eqref{est30}, \eqref{est31} and
\eqref{F2}, we get
\begin{align}
&\frac{1}{2}\frac{d}{dt}\Big(\Vert\nabla\mu\Vert^2+\Vert\nabla\psi\Vert^2+\int_\Omega p(\varphi)(\mu-\psi)^2\Big)
+\frac{1}{2}\Vert\nabla\varphi_t\Vert^2+\Vert\psi_t\Vert^2\leq c_3\Vert\varphi_t\Vert^2\nonumber\\
&+
\Gamma\big(\Vert\mu\Vert_V^2\Vert\nabla\mu\Vert^2+\Vert\psi\Vert_V^2\Vert\nabla\psi\Vert^2\big)
+\Gamma\big(\Vert\mu\Vert_V^2+\Vert\psi\Vert_V^2+\Vert p(\varphi)\Vert_{L^{6/5}(\Omega)}^4\big).
%\Gamma\Vert\mu\Vert^2\Vert\nabla\mu\Vert^2+\Gamma\big(\Vert\mu\Vert^2+\Vert\psi\Vert_V^2+1\big).
\label{diffineq2}
\end{align}
We now need an estimate for the $L^2$-norm of $\varphi_t$ in \eqref{diffineq2}. This can be obtained by testing
\eqref{eq1} by $\varphi_t$ in $H$, integrating by parts in $\Omega$
%the term $(\Delta\mu,\varphi_t)$
and using \eqref{eq2}. This yields
\begin{align}
&\Vert\varphi_t\Vert^2=(\mu,\Delta\varphi_t)+\big(p(\varphi)(\psi-\mu),\varphi_t\big)\nonumber\\
&=-\frac{1}{2}\frac{d}{dt}\Vert\Delta\varphi\Vert^2-\int_\Omega F''(\varphi)\nabla\varphi\cdot\nabla\varphi_t
+\big(p(\varphi)(\psi-\mu),\varphi_t\big).\nonumber
\end{align}
Hence, we have
\begin{align}
&\frac{1}{2}\frac{d}{dt}\Vert\Delta\varphi\Vert^2
+\frac{1}{2}\Vert\varphi_t\Vert^2\leq\Big|\int_\Omega F''(\varphi)\nabla\varphi\cdot\nabla\varphi_t\Big|
+\frac{1}{2}\Vert p(\varphi)\Vert_{L^3(\Omega)}^2\Vert\mu-\psi\Vert_{L^6(\Omega)}^2
%\frac{1}{2}p_\infty^2\Vert\psi-\mu\Vert^2
\nonumber\\
&
%\leq\frac{1}{8c_3}\Vert\nabla\varphi_t\Vert^2+
\leq\Vert F''(\varphi)\Vert_{L^{7/2}(\Omega)}\Vert\nabla\varphi\Vert_{L^{14/3}(\Omega)}\Vert\nabla\varphi_t\Vert
+c\Vert p(\varphi)\Vert_{L^3(\Omega)}^2\big(\Vert\mu\Vert_V^2+\Vert\psi\Vert_V^2\big)\nonumber\\
&\leq \frac{1}{8c_3}\Vert\nabla\varphi_t\Vert^2+
c\Vert F''(\varphi)\Vert_{L^{7/2}(\Omega)}^2\Vert\nabla\varphi\Vert_{L^{14/3}(\Omega)}^2
+c\Vert p(\varphi)\Vert_{L^3(\Omega)}^2\big(\Vert\mu\Vert_V^2+\Vert\psi\Vert_V^2\big).
%+c\big(\Vert\nabla\varphi\Vert^2+\Vert\varphi\Vert_{L^{10}}^4\Vert\nabla\varphi\Vert_{L^{10/3}}^2\big)
%+\frac{1}{2}p_\infty^2\Vert\psi-\mu\Vert^2,
\label{diffineq3}
\end{align}
%where we have used \eqref{est15}.
Recalling that $\varphi$ is bounded in $L^{14}(Q)$ (cf \eqref{est17} and \eqref{estro} with $\theta=14$),
(F) implies that $F''(\varphi)$ is bounded in $L^{7/2}(Q)$
{\color{black}(note that $\rho<6$)}.
Moreover, $\nabla\varphi$ is bounded in $L^{14/3}(Q)$ (cf. \eqref{est17} and \eqref{est19} with $s=2$).
Therefore the second term on the right hand side of
the last inequality in \eqref{diffineq3} is bounded in $L^1(0,T)$.

Furthermore, $\varphi$ is also bounded in $L^8(0,T;L^\infty(\Omega))$
(cf. \eqref{est17} and \eqref{estro} with $\theta=\infty$) and, being $q\leq 4$, (P1)
implies that $p(\varphi)$ is bounded in $L^2(0,T;L^3(\Omega))$.

By combining \eqref{diffineq2} with \eqref{diffineq3}, also on account of \eqref{est30} and \eqref{est31},
we obtain the following differential inequality
\begin{align}
&\frac{1}{2}\frac{d}{dt}\Big(\Vert\nabla\mu\Vert^2+\Vert\nabla\psi\Vert^2+2c_3\Vert\Delta\varphi\Vert^2+\int_\Omega p(\varphi)(\mu-\psi)^2\Big)
+\frac{1}{4}\Vert\nabla\varphi_t\Vert^2+\Vert\psi_t\Vert^2\nonumber\\
&\leq \sigma_1\big(\Vert\nabla\mu\Vert^2+\Vert\nabla\psi\Vert^2\big)+\sigma_2,\label{diffineq4}
%c\big(\Vert\nabla\varphi\Vert^2+\Vert\varphi\Vert_{L^{10}}^4\Vert\nabla\varphi\Vert_{L^{10/3}}^2\big)
%+\alpha p_\infty^2\Vert\psi-\mu\Vert^2\nonumber\\
%&+\sigma_1\Vert\nabla\mu\Vert^2+\Gamma\big(\Vert\mu\Vert^2+\Vert\psi\Vert_V^2+1\big),
\end{align}
%where $\sigma_1:=\Gamma\Vert\mu\Vert^2$ and
where
%$\sigma_1:=c\Vert p(\varphi)\Vert_{L^3(\Omega)}^2$ and
%$$\sigma_2:=c\big(\Vert\nabla\varphi\Vert^2+\Vert\varphi\Vert_{L^{10}}^4\Vert\nabla\varphi\Vert_{L^{10/3}}^2\big)
%+\alpha p_\infty^2\Vert\psi-\mu\Vert^2+\Gamma\big(\Vert\mu\Vert^2+\Vert\psi\Vert_V^2+1\big),$$
\begin{align}
&\sigma_1:=c\Vert p(\varphi)\Vert_{L^3(\Omega)}^2,\quad \sigma_2:=c\Vert F''(\varphi)\Vert_{L^{7/2}(\Omega)}^2\Vert\nabla\varphi\Vert_{L^{14/3}(\Omega)}^2+\Gamma\Vert p(\varphi)\Vert_{L^3(\Omega)}^2.
\label{sigmai}
\end{align}
%Thanks to the regularity properties \eqref{basicreg1}--\eqref{basicreg3} of the weak solution
%we have $\sigma_1,\sigma_2\in L^1(0,T)$.
Notice that
$$\Vert\sigma_1\Vert_{L^1(0,T)}\leq \Gamma,\quad \Vert\sigma_2\Vert_{L^1(0,T)}\leq\Gamma.$$
Using Gronwall's lemma and recalling the assumptions on the initial data
(in particular, $\varphi_0\in H^3(\Omega)$ implies that $\mu(0)\in V$) from \eqref{diffineq4} we get
that $\nabla\mu$ and $\Delta\varphi$ belong to $L^\infty(0,T;H)$, $\psi\in L^\infty(0,T;V)$,
$\nabla\varphi_t$ and $\psi_t$ belong to $L^2(0,T;H)$.
Also, thanks to (F), we have that $F'(\varphi)\in L^\infty(0,T;H)$. Therefore $\mu\in L^\infty(0,T;H)$ so that
\begin{align}
&\mu\in L^\infty(0,T;V).\label{bound1}
\end{align}
Moreover, due to elliptic regularity result for the homogeneous Neumann problem, we deduce
$\varphi\in L^\infty(0,T;H^2(\Omega))$. From this property and \eqref{bound1} we infer
we have also
\begin{align}
&\varphi\in L^\infty(0,T;H^3(\Omega)).\label{bound2}
\end{align}
Indeed, since $\varphi\in L^\infty(0,T;H^2(\Omega))$, we have $F'(\varphi)\in L^\infty(0,T;V)$.
From \eqref{bound1} we then get $\Delta\varphi\in L^\infty(0,T;V)$ and \eqref{bound2} follows by
standard elliptic regularity.

Finally, as far as $\varphi_t$ is concerned, by integrating \eqref{diffineq3} in time between $0$ and $t$ we get
$\varphi_t\in L^2(0,T;H)$ and this bound together with the bound for $\nabla\varphi_t$ deduced above
imply $\varphi_t\in L^2(0,T;V)$. \end{proof}

%%%%%%%%%%%%%%%%%%%%%%%%%%%%%%%%%%%%%%%%%%%%%%%%%%%%%%%%%%%%%%%%%%%%%%%%%%%%%%%%%%%%%%%%%%%%%%%%%%%%%%%%%%%%%%%%%%%%%%%%%%%%%%%%%%%%%%%%%
%%%%%%%%%%%%%%%%%%%%%%%%%%%%%%%%%%%%%%%%%%%%%%%%%%%%%%%%%%%%%%%%%%%%%%%%%%%%%%%%%%%%%%%%%%%%%%%%%%%%%%%%%%%%%%%%%%%%%%%%%%%%%%%%%%%%%
%%%%%%%%%%%%%%%%%%%%%%%%%%%%%%%%%%%%%%%%%%%%%%%%%%%%%%%%%%%%%%%%%%%%%%%%%%%%%%%%%%%%%%%%%%%%%%%%%%%%%%%%%%%%%%%%%%%%%%%%%%%%%%%%%%%%%%%%%%%%%

We now show that \eqref{eq1}--\eqref{bcs} define a dynamical system on a suitable phase space.

Let $M>0$ be given. Set
\begin{align*}
&\mathcal{W}_M:=\{w=[\varphi,\psi]\in V\times H:\:\:\mathcal{E}(w)\leq M\}.
\end{align*}
and endow $\mathcal{W}_M$ with the metric
\begin{align*}
&\textbf{d}_{\mathcal{W}_M}(w_2,w_1):=
\Vert\varphi_2-\varphi_1\Vert_V+\Vert\psi_2-\psi_1\Vert,\quad\forall w_i:=[\varphi_i,\psi_i]\in \mathcal{W}_M,\quad i=1,2,
\end{align*}
so that it is a complete metric space.
As a consequence of Theorem \ref{existence} and Theorem \ref{uniqueness}, assuming that
(F) and (P1) are satisfied, we can define a semigroup $\{S_M(t)\}_{t\geq 0}$ of {\itshape closed} operators on $\mathcal{W}_M$ (cf. \cite{PZ}) by setting
$$
[\varphi(t),\psi(t)]=S_M(t)[\varphi_0,\psi_0], \quad \forall t\geq 0,
$$
where $[\varphi,\psi]$ is the unique (weak) solution to Problem \eqref{eq1}--\eqref{ics}.

Notice that we have the total mass constraint
\begin{align*}
\big|\overline{\varphi(t)}+\overline{\psi(t)}\big|
=|\overline{\varphi}_0+\overline{\psi}_0|\leq Q(M),\quad\forall t\geq 0,
\end{align*}
where henceforth by $Q=Q(M)$ we denote a nonnegative continuous monotone increasing function of $M$
(which may also depend on $F$, $p$ and $\Omega$). Such function may change even within the same line.

\begin{thm}
Let (F) and (P1) be satisfied. Then the dynamical system $(\mathcal{W}_M,\{S_M(t)\}_{t\geq 0})$
possesses the global attractor.
\end{thm}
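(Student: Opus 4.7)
The plan is to apply a general existence theorem for global attractors in the setting of semigroups of closed operators (cf.~\cite{PZ}). The continuous dependence estimate of Theorem~\ref{uniqueness}, together with uniqueness of the weak solution, ensures that each $S_M(t)$ is closed on $\mathcal{W}_M$: if $w_n\to w$ in $\mathcal{W}_M$ and $S_M(t)w_n\to v$, then $v=S_M(t)w$ by passage to the limit in the weak formulation. What remains is thus to exhibit a bounded absorbing set and a compact attracting set.

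For the absorbing set, the energy inequality \eqref{enineq} gives $\mathcal{E}(S_M(t)w_0)\le\mathcal{E}(w_0)\le M$ for every $w_0\in\mathcal{W}_M$ and every $t\ge0$; hence $\mathcal{W}_M$ itself is positively invariant and plays the role of a bounded absorbing set. For the compact attracting set, I would proceed by a smoothing argument. First, integrating \eqref{enineq} over $[0,\infty)$ yields
\begin{equation}\label{globintatt}
\int_0^\infty\bigl(\|\nabla\mu(s)\|^2+\|\nabla\psi(s)\|^2\bigr)\,ds\le M.
\end{equation}
The means are controlled: $|\overline{\varphi}(t)|\le Q(M)$ from \eqref{F2}, hence $|\overline{\psi}(t)|\le Q(M)$ by total mass conservation, while $|\overline{\mu}(t)|\le Q(M)$ follows from the pointwise bound $|F'(s)|\le c(1+|s|^{\rho-1})$ (a consequence of \eqref{F1}) combined with $\varphi\in L^\infty(0,\infty;L^6(\Omega))$ and $\rho<6$. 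Together with \eqref{globintatt}, this yields $\mu,\psi\in L^2(0,\infty;V)$ with norm bounded by $Q(M)$.

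Consequently, for every $n\in\mathbb{N}$ there exists $t_n^\ast\in[n,n+\tfrac12]$ with $\|\mu(t_n^\ast)\|_V+\|\psi(t_n^\ast)\|_V\le Q(M)$. From \eqref{eq2} and elliptic regularity for the Neumann problem (first $\varphi(t_n^\ast)\in H^2(\Omega)\hookrightarrow L^\infty(\Omega)$, so that $F'(\varphi(t_n^\ast))\in V$, then bootstrapping to $\Delta\varphi(t_n^\ast)\in V$) one obtains $\|\varphi(t_n^\ast)\|_{H^3(\Omega)}+\|\psi(t_n^\ast)\|_V\le Q(M)$. Applying the regularity theorem of Section~\ref{sec:strong} on $[t_n^\ast,t_n^\ast+1]$ then yields a bound of $[\varphi,\psi]$ in $L^\infty(t_n^\ast,t_n^\ast+1;H^3(\Omega)\times V)$ of the form $Q(M)$. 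Since the intervals $[t_n^\ast+\tfrac12,t_n^\ast+1]$ cover $[1,\infty)$, the orbit $S_M(t)w_0$ lies in a bounded ball of $H^3(\Omega)\times V$ for every $t\ge1$, with radius depending only on $M$. By the compact embedding $H^3(\Omega)\times V\hookrightarrow\hookrightarrow V\times H$ this ball is relatively compact in the metric $\textbf{d}_{\mathcal{W}_M}$, providing the required compact attracting set.

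The hard part will be verifying that the constants appearing in the regularity estimates of Section~\ref{sec:strong} depend only on $M$ and not on the full past history of the orbit. This reduces to checking that the quantities $\sigma_1$, $\sigma_2$ in \eqref{sigmai} and the constant $\Gamma$ there can be bounded in $L^1(t_n^\ast,t_n^\ast+1)$ uniformly in $n$; this should follow from the global-in-time integrability $\mu,\psi\in L^2(0,\infty;V)$, from $\varphi\in L^\infty(0,\infty;V)$, and from the polynomial growth of $p$ and $F''$, exploiting the Gagliardo--Nirenberg interpolations already used in Section~\ref{sec:strong}. Once this uniformity is established, the abstract attractor existence result of \cite{PZ} for closed semigroups furnishes the global attractor.
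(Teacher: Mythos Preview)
Your proposal is correct in strategy and in its identification of the key point (uniformity of the regularity constants in $M$), but it takes a different route from the paper and contains one minor technical slip.

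\medskip
\noindent\textbf{How the paper proceeds.} The paper does not pick ``good times'' $t_n^\ast$ and restart the regularity theorem there. Instead it rewrites the higher-order differential inequality \eqref{diffineq4} in the form
\[
\frac{d\Phi}{dt}+\tfrac14\Vert\nabla\varphi_t\Vert^2+\Vert\psi_t\Vert^2\le \sigma_1\Phi+\sigma_2,
\qquad
\Phi:=\tfrac12\Vert\nabla\mu\Vert^2+\tfrac12\Vert\nabla\psi\Vert^2+c_3\Vert\Delta\varphi\Vert^2+\tfrac12\!\int_\Omega p(\varphi)(\mu-\psi)^2,
\]
and then bounds $\int_t^{t+1}\Phi$, $\int_t^{t+1}\sigma_1$, $\int_t^{t+1}\sigma_2$ by $Q(M)$ \emph{uniformly in $t$}, using the energy identity on $[t,t+1]$ together with the uniform $L^2(t,t+1;H^3)$ control of $\varphi$ coming from the bootstrap of Theorem~\ref{existence}. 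An application of the \emph{uniform} Gronwall lemma then gives $\Phi(t)\le Q(M)$ for all $t\ge 1$ in one stroke; the bootstrap $H^2\to H^3$ for $\varphi$ is performed afterwards, pointwise in $t$. This avoids any covering argument and is the standard device in attractor theory.

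\medskip
\noindent\textbf{Comparison.} Your approach---choosing $t_n^\ast$ where $\Vert\mu\Vert_V+\Vert\psi\Vert_V$ is controlled, bootstrapping $\varphi(t_n^\ast)\in H^3$ via \eqref{eq2}, and then invoking the regularity theorem on unit intervals---also works, and your acknowledged ``hard part'' (uniform $L^1$ bounds for $\sigma_1,\sigma_2$) is precisely the content of the paper's estimates \eqref{est32}--\eqref{est35}. The uniform-Gronwall argument is cleaner; your method is more elementary but requires patching intervals.

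\medskip
\noindent\textbf{A small gap.} Your covering claim is false as written: with $t_n^\ast\in[n,n+\tfrac12]$ the half-intervals $[t_n^\ast+\tfrac12,\,t_n^\ast+1]\subset[n+\tfrac12,\,n+\tfrac32]$ need not meet for consecutive $n$ (e.g.\ $t_0^\ast=0$ gives $[\tfrac12,1]$ while $t_1^\ast=\tfrac32$ gives $[2,\tfrac52]$). A simple fix is to choose $t_n^\ast\in[n/2,(n+1)/2]$ and use the full intervals $[t_n^\ast,t_n^\ast+1]$, which then overlap; or apply the regularity theorem on $[t_n^\ast,t_n^\ast+2]$.
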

\begin{proof}
{\color{black}
The proof is carried out by showing the existence of a compact (in $\mathcal{W}_M$) absorbing set $\mathcal{B}_M$
for the semigroup $\{S_M(t)\}_{t\geq 0}$. This fact will allow us to apply a general result on the existence of global attractors
for semigroup of closed operators proven in \cite{PZ}.

Let us first} write \eqref{diffineq4} in the form
\begin{align}
&\frac{d\Phi}{dt}
+\frac{1}{4}\Vert\nabla\varphi_t\Vert^2+\Vert\psi_t\Vert^2
\leq \sigma_1\Phi
%\big(\Vert\nabla\mu\Vert^2+\Vert\nabla\psi\Vert^2\big)
+\sigma_2,\label{diffineq4bis}
%c\big(\Vert\nabla\varphi\Vert^2+\Vert\varphi\Vert_{L^{10}}^4\Vert\nabla\varphi\Vert_{L^{10/3}}^2\big)
%+\alpha p_\infty^2\Vert\psi-\mu\Vert^2\nonumber\\
%&+\sigma_1\Vert\nabla\mu\Vert^2+\Gamma\big(\Vert\mu\Vert^2+\Vert\psi\Vert_V^2+1\big),
\end{align}
where
\begin{align*}
&\Phi:=\frac{1}{2}\Vert\nabla\mu\Vert^2+\frac{1}{2}\Vert\nabla\psi\Vert^2+c_3\Vert\Delta\varphi\Vert^2+\frac{1}{2}\int_\Omega p(\varphi)(\mu-\psi)^2.
\end{align*}
and $\sigma_1$ and $\sigma_2$ are defined as in \eqref{sigmai}.
Notice that, since $\Gamma=\Gamma\big(\Vert\varphi_0\Vert_V,\Vert\psi_0\Vert\big)$ and since $[\varphi_0,\psi_0]\in \mathcal{W}_M$,
then the constant $\Gamma$ that
bounds the $L^1-$norm of $\sigma_2$ will depend only on $M$.

%Henceforth, we shall denote by $Q=Q(M)$ a nonnegative continuous monotone increasing function of $M$
%($Q$ may also depend on $F$, $p$ and $\Omega$).

Integrating the energy identity \eqref{enid} between $t$ and $t+1$, we get, for all $t\geq 0$.
\begin{align}
&\int_t^{t+1}\Vert\nabla\mu\Vert^2 d\tau\leq M,\quad\int_t^{t+1}\Vert\nabla\psi\Vert^2 d\tau\leq M,
\quad\int_t^{t+1}\int_\Omega p(\varphi)(\mu-\psi)^2\leq M.
\label{est34}
\end{align}
Recalling that $q\leq 4$, we deduce from (P1) that
\begin{align}
&\int_t^{t+1}\sigma_1(\tau)d\tau\leq c\big(1+\Vert\varphi\Vert_{L^{2q}(t,t+1;L^{3q}(\Omega))}^{2q}\big)\nonumber\\
&\leq c\big(1+\Vert\varphi\Vert_{L^\infty(t,t+1;V)}^{2q}+\Vert\varphi\Vert_{L^2(t,t+1;H^3(\Omega))}^{2q}\big)
\leq Q(M).\label{est32}
\end{align}
Moreover, on account of (F) and (P1), we obtain
\begin{align}
&\int_t^{t+1}\sigma_2(\tau)d\tau
\leq c\Vert F''(\varphi)\Vert_{L^{7/2}(t,t+1;L^{7/2}(\Omega))}^2\Vert\nabla\varphi\Vert_{L^{14/3}(t,t+1;L^{14/3}(\Omega))}^2
+Q(M)\nonumber\\
&\leq c\big(1+\Vert\varphi\Vert_{L^{7(\rho-2)/2}(t,t+1;L^{7(\rho-2)/2}(\Omega)}^{2(\rho-2)}\big)
\Vert\nabla\varphi\Vert_{L^{14/3}(t,t+1;L^{14/3}(\Omega))}^2
+Q(M)\nonumber\\
&\leq c\big(1+\Vert\varphi\Vert_{L^\infty(t,t+1;V)}^{2(\rho-2)}+\Vert\varphi\Vert_{L^2(t,t+1;H^3(\Omega))}^{2(\rho-2)}\big)
\big(\Vert\nabla\varphi\Vert_{L^\infty(t,t+1;H)}^2+\Vert\nabla\varphi\Vert_{L^2(t,t+1;H^2(\Omega))}^2\big)\nonumber\\
&+Q(M)\leq c\big(1+\Vert\varphi\Vert_{L^\infty(t,t+1;V)}^{2(\rho-1)}+\Vert\varphi\Vert_{L^2(t,t+1;H^3(\Omega))}^{2(\rho-1)}\big)
+Q(M)\leq Q(M).\label{est33}
\end{align}

In \eqref{est32} and \eqref{est33} we have used the fact that the $L^2(t,t+1;H^3(\Omega))-$norm of $\varphi$ can be controlled, uniformly in time, in terms of $\Vert\varphi_0\Vert_V,\Vert\psi_0\Vert$ and hence of $M$, when {\color{black}$4<\rho<6$.}
 Indeed, we can use the iteration argument outlined in the proof of Theorem \ref{existence} (cf. Step II; if $\rho=4$ no iteration is needed).

Therefore, we have (see \eqref{est34})
\begin{align}
&\int_t^{t+1}\Phi(\tau) d\tau\leq\frac{3M}{2}+c_3\int_t^{t+1}\Vert\Delta\varphi(\tau)\Vert^2 d\tau\leq Q(M).
\label{est35}
\end{align}

Thanks to \eqref{est32}--\eqref{est35} we can now apply the uniform Gronwall's lemma to \eqref{diffineq4bis}
and obtain
\begin{align}
&\Phi(t)\leq Q(M),\quad\forall t\geq 1.
\label{est36}
\end{align}
On the other hand, the definition of the phase space $\mathcal{W}_M$ and \eqref{F2} yield
\begin{align}
&\Vert\varphi(t)\Vert_{L^\rho(\Omega)}\leq Q(M),\qquad \Vert\psi(t)\Vert\leq Q(M),\quad \forall t\geq 0.
\label{est41}
\end{align}
%\begin{proof}
Hence, we deduce from \eqref{est36} and \eqref{est41} that
\begin{align}
&\Vert\varphi(t)\Vert_{H^2(\Omega)}\leq Q(M),\quad\forall t\geq 1.\label{est40}
\end{align}

Moreover, \eqref{est36} and \eqref{est40} entail
\begin{align*}
&\Vert\mu(t)\Vert_V\leq Q(M),\quad\forall t\geq 1.
\end{align*}
Also, using \eqref{est40} once more, we have
\begin{align*}
\Vert\nabla F'(\varphi(t))\Vert\leq\Vert F''(\varphi(t))\nabla\varphi(t)\Vert
\leq Q(M),\quad\forall t\geq 1.
\end{align*}
%where we have used again the embedding
% $L^\infty(0,T;H)\cap L^2(0,T;V)
%\hookrightarrow L^{10/3}(Q)$ and \eqref{} for $\nabla\varphi$ and
%the embedding $L^\infty(0,T;V)\cap L^2(0,T;H^2(\Omega))\hookrightarrow $
The last two bounds, \eqref{eq2} and elliptic regularity imply
\color{black}
%On account of this bound and of \eqref{est36} we can also deduce
%(by using, when $4<\rho<17/3$, the iteration argument of the proof
%of Theorem \ref{existence})
\begin{align}
&\Vert\varphi(t)\Vert_{H^3(\Omega)}\leq Q(M),\quad\forall t\geq 1.\label{est42}
\end{align}

Finally, from \eqref{est36} and $\eqref{est41}_2$, we get
\begin{align}
&\Vert\psi(t)\Vert_V\leq Q(M),\quad\forall t\geq 1.\label{est43}
\end{align}
Thanks to \eqref{est42} and \eqref{est43}, we have thus proven that there exists $\Lambda=\Lambda(M)>0$ such that
\begin{align}
\mathcal{B}_M:=\big\{w:=[\varphi,\psi]\in H^3(\Omega)\times H^1(\Omega):\:\:\Vert\varphi\Vert_{H^3(\Omega)}\leq \Lambda,
\:\:\Vert\psi\Vert_{H^1(\Omega)}\leq\Lambda,\:\:
\mathcal{E}(w)\leq M\big\}\nonumber
\end{align}
is an absorbing set for the semigroup $\{S_M(t)\}_{t\geq 0}$ in $\mathcal{W}_M$. Since $\mathcal{B}_M$ is also compact in $\mathcal{W}_M$, the conclusion follows from \cite[Thm. 2]{PZ}.
\end{proof}

\color{black}
\textbf{Acknowledgments}. {\color{black} The authors thank the reviewers for their careful reading of the manuscript as well as for their remarks which
have helped us to improve the quality and the clarity of this contribution.} The authors are members of the Gruppo Nazionale per l'Analisi Matematica, la Probabilit\`{a} e le loro Applicazioni (GNAMPA) of the Istituto Nazionale di Alta Matematica (INdAM).
\color{black}

\end{document}